\newcommand{\mathsym}[1]{{}}
\newcommand{\unicode}[1]{{}}
\newcommand*{\defeq }{\mathrel{\vcenter{\baselineskip0.5ex \lineskiplimit0pt
                     \hbox{\scriptsize.}\hbox{\scriptsize.}}}%
                     =}
\newcommand{\R}{\ensuremath{\mathbb{R}}}
\newcommand{\f}{\varphi}
\newcommand{\al}{\alpha}
\newcommand{\la}{\lambda}
\newcommand{\e}{\varepsilon}
\newcommand{\sgn}{\mathrm{sign}}
\newtheorem {theorem} {Theorem}
\newtheorem {definition} {Definition}
\newtheorem {proposition} [theorem]{Proposition}
\newtheorem {corollary}{Corollary}
\newtheorem {lemma}  [theorem]{Lemma}
\newtheorem {example} [theorem]{Example}
\newtheorem {remark}{Remark}
\newtheorem {mtheorem} {Theorem}
\def\R{\mathbb R}
\title[Lyapunov coefficients for monodromic tangential singularities]
{Lyapunov coefficients for monodromic tangential \\ singularities in Filippov vector fields}
\author[D. D. Novaes and L. A. Silva]
{Douglas D. Novaes and Leandro A. Silva}
\address{Departamento de Matem\'{a}tica, Instituto de Matem\'{a}tica, Estat\'{i}stica e Computa\c{c}\~{a}o Cient\'{i}fica, Universidade
Estadual de Campinas, \ Rua S\'{e}rgio Buarque de Holanda, 651, Cidade Universit\'{a}ria Zeferino Vaz, 13083-859, Campinas, SP,
Brazil}
\email{ddnovaes@unicamp.br}
\email{lasilva@ime.unicamp.br}
\begin{document}

\subjclass[2010]{34C23,34A36,37G15}

\keywords{Filippov vector fields, monodromic tangential singularities, center-focus problem, Lyapunov coefficients, limit cycles}

\maketitle

\begin{abstract}
In planar analytic vector fields, a monodromic singularity can be distinguished between a focus or a center by means of the Lyapunov coefficients, which are given in terms of the power series coefficients of the first-return map defined around the singularity. In this paper, we are interested in an analogous problem for monodromic tangential singularities of piecewise analytic vector fields $Z=(Z^+ ,Z^-)$. First, we prove that the first-return map, defined in a neighborhood of a monodromic tangential singularity, is analytic, which allows the definition of the Lyapunov coefficients. Then, as a consequence of a general property for pair of involutions, we obtain that the index of the first non-vanishing Lyapunov coefficient is always even. In addition, a general recursive formula together with a Mathematica algorithm for computing the Lyapunov coefficients is obtained. We also provide results regarding limit cycles bifurcating from monodromic tangential singularities. Several examples are analyzed.
\end{abstract}

\section{Introduction and statements of the main results}

The center-focus problem and the cyclicity problem are classical problems in the qualitative theory of smooth planar vector fields, which goes back to the studies of Poincaré and Lyapunov (see, for instance, \cite{romanovski2009center}).

The center-focus problem consists in characterizing when a monodromic singularity of a planar vector field is either a center or a focus. A singularity is called a center if there exists a small neighborhood of this point such that all orbits are closed. A singularity is called a stable (resp. unstable) focus if there exists a small neighborhood of this point such that all orbits spiral towards to (resp. outwards from) the singularity. 
The center-focus problem can be studied by means of the first-return map defined in a section containing the monodromic singularity. Indeed, the monodromic singularity is a center if, and only if, the first-return map is the identity. If the vector field is analytic, then the first-return map is also analytic as well as the displacement function, which is given as the difference between the first-return map and the identity. The coefficients of the power series of the displacement function around the singularity provide the so-called {\it Lyapunov Coefficients}, $V_n$'s. Consequently,  the monodromic singularity is a center if, and only if, $V_n$ vanishes for every $n\in\mathbb{N}.$ This immediately  provides sufficient conditions in order for a monodromic singularity to be a focus.  For polynomials vector fields, Poincar\'{e} and Lyapunov reduced the problem of solving the infinite system of equations $V_n=0, n\in\mathbb{N},$ to an equivalent problem of finding a specific first integral for the vector field. 

On the other hand, the cyclicity problem consists in estimating the number of limit cycles that can bifurcate from a monodromic singularity, which can also be investigated by means of the Lyapunov coefficients. For more information on the center-focus and cyclicity problems, the interested readers are referred to the book \cite{romanovski2009center}.

Differential equations with discontinuities represent a very important class of dynamical systems due to their applications in many areas of applied science, see for instance the classical book of Andronov \cite{andronov1966theory} and, for more modern references, the books \cite{CRM,BBCK,Mike18}. Filippov, in his celebrated book \cite{Filippov88}, provided a rigorous mathematical formalization for the theory of non-smooth differential equations, and nowadays, such differential equations are called Filippov systems.
The center-focus and cyclicity problems have also been investigated for non-smooth planar vector fields. 
In \cite{pleshkan73}, Ple\u{s}kan and Sibirski\u{i} considered the center-focus problem for monodromic singularities of focus-focus type of piecewise analytic vector fields.
Filippov, in Chapter 4 of his book \cite{Filippov88}, computed several Lyapunov coefficients for a monodromic singularity of fold-fold type of a piecewise smooth vector field. In \cite{CGP95}, Coll et al. obtained the first seven Lyapunov constants for monodromic singularities of focus-focus type of discontinuous Li\'{e}nard differential equations. Then, in \cite{coll1999}, using an algebraic approach introduced by Cima et al. in \cite{cima97}, they derived  general expressions for the Lyapunov constants for monodromic singularities of focus-focus type of some families of discontinuous Li\'{e}nard differential equations. The same authors in \cite{gassulcoll} addressed the center-focus and cyclicity problems for monodromic singularities of focus-focus type, fold-fold type, and focus-fold type, computing explicitly the first three Lyapunov coefficients for these three types of monodromic singularities. In \cite{GasTor03}, Gasull and Torregrosa also addressed the center-focus and cyclicity problems for monodromic singularities of focus-focus type of several classes of piecewise smooth systems. The generic unfolding of a monodromic singularity of fold-fold type was considered in \cite{guardia11,kuzn03} (see also \cite{freire17} on this matter).  Recently, the problem of bifurcation of limit cycles from monodromic singularities in discontinuous systems by means of Lyapunov coefficients has been subject of investigation by several studies (see, for instance, \cite{braga21,GouTor20}, and the references therein). 

In this paper, we are interested in the center-focus and cyclicity problems for general {\it monodromic tangential singularities}, allowing them to be more degenerated than the fold-fold singularities already considered in the research literature.

\subsection{Filippov Vector Fields and Tangential Singularities} Consider the following planar piecewise analytic vector field:
\begin{equation}\label{eq:filippov}
(\dot{x},\dot{y}) = Z(x,y)=\begin{cases}
Z^+(x,y),&  h(x,y) > 0, \\
Z^-(x,y),& h(x,y) < 0,
\end{cases}
\quad (x,y)\in D,
\end{equation}
where $D$ is an open set of $\R^2,$ $Z^+,Z^-:D\rightarrow \R^2$ are analytic vector fields on $D,$ and $h:D\rightarrow\R$ is a smooth function having $0$ as a regular value. Notice that $\Sigma=h^{-1}(0)$ is the discontinuity manifold of \eqref{eq:filippov}. Usually, when the context is clear, the piecewise vector field \eqref{eq:filippov} can be concisely denoted by $Z=(Z^+,Z^-).$ 

Here, we shall assume the Filippov's convention \cite{Filippov88} for the trajectories of \eqref{eq:filippov}, which will be accordingly called {\it Filippov Vector Field}. In the Filippov context, the notion of singularities also comprehends the tangential points $\Sigma^t,$ which are constituted by the contact points between $Z^+$ and $Z^-$ with $\Sigma,$ that is, $$\Sigma^t=\{p\in \Sigma:\, Z^+h(p)\cdot Z^-h(p) = 0\},$$ where $Fh(p)=\langle\nabla h(p),F(p)\rangle$ denotes the Lie derivative of $h$ at $p$ in the direction of the vector field $F.$ 

Some tangential singularities give rise to local monodromic behavior. In what follows, we shall introduce the concept of  a $(2k^+,2k^-)$-monodromic tangential singularity for Filippov vector fields \eqref{eq:filippov}. Recall that $p$ is a {\it contact of multiplicity $k$} (or {\it order} $k-1$) between a smooth vector field $F$ and $\Sigma$ if $0$ is a root of multiplicity $k$ of $f(t)\defeq h\circ \f_{F}(t,p),$ where $t\mapsto \f_{F}(t,p)$ is the trajectory of $F$ starting at $p.$ Equivalently,
\begin{equation}\label{lieder}
F h(p) = F^2h(p) = \ldots = F^{k-1}h(p) =0,\text{ and } F^{k} h(p)\neq 0,
\end{equation}
where the higher Lie derivative $F^n h(p)$ is recursively defined  as $F^{n}h(p) =  F (F^ {n-1}h)(p),$ for $n>1.$ In addition, when considering Filippov vector fields \eqref{eq:filippov}, an even multiplicity contact, say $2k,$ is called {\it invisible} for $Z^+$ (resp. $Z^-$) when $(Z^{+})^{2k}h(p)<0$ (resp. $(Z^{-})^{2k}h(p)>0$). Otherwise, it is called {\it visible}. 

\begin{definition} \label{def:kpknmono}
A tangential singularity $p\in\Sigma^t$ of a Filippov vector field $Z$ \eqref{eq:filippov} is called a $(2k^+,2k^-)$-monodromic tangential singularity provided that $p$ is simultaneously an invisible $2k^{+}$-multiplicity contact of $Z^{+}$ with $\Sigma$ and an invisible $2k^{-}$-multiplicity contact of $Z^{-}$ with $\Sigma,$ and $Z$ has a first-return map defined on $\Sigma$ around $p.$
\end{definition}

\subsection{Main Results: Lyapunov Coefficients}

In \cite{gassulcoll}, the authors studied the Lyapunov coefficients for parabolic-parabolic points, which in light of Definition \ref{def:kpknmono} correspond to the $(2,2)$-monodromic tangential singularities. In this paper, our main goal consists in extending the previous results for $(2k^+,2k^-)$-monodromic tangential singularity. It is worth mentioning that in \cite{gassulcoll} the Lyapunov coefficients were obtained by means of generalized polar coordinates  (see \cite{broer1991structures}). Here, motivated by Teixeira's works \cite{teixeira77,teixeira81}, we propose a different way for obtaining it by considering auxiliary sections, which are transversal to both the flow and the discontinuity manifold. This method allows us to provide a general recursive formula for the Lyapunov coefficients. 

Suppose that the Filippov vector field \eqref{eq:filippov} has a $(2k^+,2k^-)$-monodromic tangential singularity at $p\in\Sigma.$ By taking local coordinates, we may assume, without loss of generality, that $p=(0,0)$ and $h(x,y)=y.$ Denoting $Z^{\pm}=(X^{\pm}(x,y), Y^{\pm}(x,y)),$ the Filippov vector field \eqref{eq:filippov} writes as
\begin{equation} \label{sistemainicial}
Z(x,y)= \begin{cases}
(X^+(x,y), Y^+(x,y)), & y > 0, \\
(X^-(x,y), Y^-(x,y)),& y < 0.
\end{cases}
\end{equation}
Working out the higher Lie derivatives \eqref{lieder} for $Z^{\pm},$ we get 
\[
(Z^{\pm})^nh(0,0)=X^{\pm}(0,0)^{n-1}\dfrac{\partial^{n-1}Y^{\pm}}{\partial x^{n-1}}(0,0),
\]
provided that $(Z^{\pm})^ih(0,0)=0$ for $i=1,\ldots,n-1.$ Thus, from Definition \eqref{def:kpknmono}, one can see that the origin is a $(2k^+,2k^-)$-monodromic tangential singularity for \eqref{sistemainicial} provided that the following three conditions are satisfied:

\bigskip

\noindent{\bf C1.} $X^{\pm}(0,0)\neq0,$ $Y^{\pm}(0,0)=0,$ $\dfrac{\partial^i Y^{\pm}}{\partial x^i}(0,0)=0$  for  $i=1,\ldots,2k^{\pm}-2,$ and  $\dfrac{\partial^{2k^{\pm}-1} Y^{\pm}}{\partial x^{2k^{\pm}-1} }(0,0)\neq0$;

\medskip

\noindent{\bf C2.} $X^+(0,0)\dfrac{\partial^{2k^{+}-1} Y^{+}}{\partial x^{2k^{+}-1} }(0,0)<0$ and $X^-(0,0)\dfrac{\partial^{2k^{-}-1} Y^{-}}{\partial x^{2k^{-}-1} }(0,0)>0;$

\bigskip

\noindent{\bf C3.} $X^+(0,0) X^-(0,0)<0.$

\bigskip

Condition {\bf C1} imposes that the origin is a contact of multiplicity $2k^+$  (resp. $2k^-$) between $Z^+$ (resp. $Z^-$) and $\Sigma.$ Condition {\bf C2} imposes that both contacts are invisible. Finally, condition {\bf C3} imposes that the orientation of the orbits of $Z^+$ and $Z^-$ around the origin agrees in such way that the trajectories of both vector fields can be concatenated  at $\Sigma$ in order to a first-return map be well defined around the origin. Denote
\begin{equation}\label{delta}
\delta=\textrm{sign}(X^+(0,0))=-\textrm{sign}(X^-(0,0)).
\end{equation}
Notice that, $Z$ is turning around the origin in the clockwise direction if $\delta>0,$ and in the anticlockwise direction if $\delta<0.$

The flows of $Z^{+}$ and $Z^{-}$ restricted, respectively, to $\Sigma^{+}=\{(x,y):\, y\geq 0\}$ and $\Sigma^{-}=\{(x,y):\, y\leq 0\}$ define half-return maps $\varphi^{+}$ and $\varphi^{-}$ on $\Sigma$ around $0,$ which are known to be \textbf{involutions} satisfying $\varphi^{+}(0)=\varphi^-(0)=0$ (see \cite[Section 4.2]{AndGomNov19}), that is, $\varphi^{+}\circ\varphi^{+}(x)=x$ and $\varphi^{-}\circ\varphi^{-}(x)=x$ whenever they are defined (see Figure \ref{involucoes}).

\begin{figure}[H]
	\centering 
	\begin{overpic}[scale=0.6]{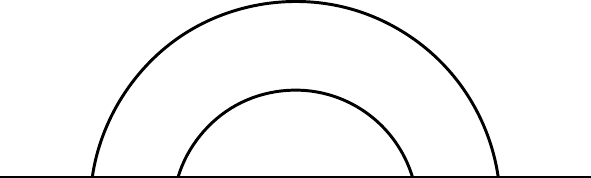}
	\put(13,-6){$x_1$}	
	\put(22,-6){$\varphi^+(x_2)$}	
	\put(48,-6){$0$}
	\put(78,-6){$\varphi^+(x_1)$}
	\put(68,-6){$x_2$}
	\put(103,-1){$\Sigma$}
	\end{overpic}
\vspace{0.5cm}
	\caption{Illustration of half-return map $\varphi^+.$}
	\label{involucoes}
\end{figure}

Our first main result states that such half-return maps are analytic provided that the vector fields $Z^{+}$ and $Z^{-}$ are analytic:
\begin{mtheorem}\label{teo:main}
Consider the Filippov vector field \eqref{sistemainicial} and  suppose that the vector field $Z^{+}$ (resp. $Z^{-}$)   is analytic  and has an invisible $2k^{+}$-multiplicity (resp. $2k^{-}$-multiplicity) contact at the origin with $\Sigma=\{(x,0):\,x\in\R\},$ for a positive integer $k^+$ (resp. $k^-$).  Then, the half-return map $\varphi^{+}$ (resp.  $\varphi^{-}$) is analytic around $x=0.$ 
\end{mtheorem}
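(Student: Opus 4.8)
The plan is to prove analyticity of $\varphi^+$ by resolving the singularity of the half-return map via a desingularizing change of coordinates that unfolds the multiplicity-$2k^+$ tangency. First I would straighten the flow of $Z^+$: since $X^+(0,0)\neq 0$, the vector field $Z^+$ is non-singular near the origin, so by the analytic flow-box theorem there is an analytic local diffeomorphism $\Psi$ sending $Z^+$ to the horizontal constant field $\partial_u$. In these new coordinates $(u,v)$ the discontinuity manifold $\Sigma$ becomes the zero set of an analytic function $g(u,v)$ whose restriction to the trajectory $v=\mathrm{const}$ through the origin, namely $u\mapsto g(u,0)$, vanishes to order exactly $2k^+$ at $u=0$ — this is precisely the translation of condition \textbf{C1}, and invisibility \textbf{C2} fixes the sign of the leading coefficient. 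The half-return map $\varphi^+$ is then conjugate (via $\Psi$ restricted to $\Sigma$) to the map that takes a point of $\{g=0\}$, flows horizontally, and returns to the next intersection with $\{g=0\}$.

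The core of the argument is then the following local model problem: given an analytic function $g(u,v)$ with $g(0,0)=0$ and $g(u,0)$ having a zero of order $2k^+$ at $u=0$ with, say, negative leading coefficient, show that the "horizontal return" involution on $\{g=0\}$ near $0$ is analytic. I would parametrize $\Sigma$ near $0$ by its natural coordinate (the old $x$, or equivalently a coordinate on $\{g=0\}$) and use the Weierstrass Preparation Theorem: after the flow-box change, solving $g(u,v)=0$ for the two branches of $u$ as functions of $v$ for $v$ near $0$ is governed by a Weierstrass polynomial of degree $2k^+$ in $u$. The two real solution branches $u = a(v^{1/k^+})$ collapse pairwise; the substitution $v = s^{k^+}$ (or a Newton–Puiseux type change) resolves the branching, and the half-return map in the resolved variable becomes the composition of analytic maps, hence analytic. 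Unwinding the substitution, $\varphi^+$ itself is analytic in the original variable. An alternative, and perhaps cleaner, route is to invoke the normal form for an invisible $2k^+$-tangency: there is an analytic change of coordinates bringing $Z^+$ near the contact to a model whose half-return map is explicitly computable (for the $2$-tangency this is the classical $x\mapsto -x$, and for higher multiplicity one obtains an analytic involution whose leading term is $-x$), and then transfer analyticity back.

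The main obstacle I anticipate is controlling the order of contact under the flow-box change of coordinates and extracting analyticity despite the $2k^+$-fold degeneracy: a naive implicit-function-theorem argument fails because $\partial_u g(0,0) = 0$ when $k^+\geq 1$ forces vanishing of the first $2k^+-1$ derivatives, so the crossing time is not a priori a smooth function of the starting point. The resolution is the substitution that trades the degenerate variable for its $k^+$-th root; the delicate point is checking that, after this substitution, the resulting expressions are genuinely analytic (not merely formal) — this is where Weierstrass preparation, together with the fact that the leading coefficient of $g(u,0)$ is nonzero, does the essential work, guaranteeing that the implicitly-defined crossing time and hence $\varphi^+$ depend analytically on the resolved parameter.

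Finally, I would note that the statement for $\varphi^-$ follows by the identical argument applied to $Z^-$ on $\Sigma^-$, with the sign conditions reversed as in \textbf{C2}; no separate proof is needed, only the symmetric bookkeeping of orientations encoded by $\delta$ in \eqref{delta}.
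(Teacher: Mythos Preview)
Your approach is viable but takes a genuinely different route from the paper. The paper proves Theorem~\ref{teo:main} by a weighted polar blow-up: applying the $(R,\theta,1,2k^+)$ generalized polar coordinates of \cite{broer1991structures} to the canonical form \eqref{sistemacanonico}, the equation on $y\geq 0$ becomes, after a time-rescaling, $dR/d\theta = H^+(R,\theta)$ with $H^+$ analytic on a neighborhood of $\{0\}\times[0,T/2]$; analytic dependence of solutions on initial data then gives that $\varphi^+(x_0)=r^+(T/2,x_0)\,\mathrm{Cs}(T/2)$ is analytic. Your flow-box strategy is more elementary in that it bypasses the special functions $\mathrm{Cs},\mathrm{Sn}$ and reduces the question to a static normal-form statement: in flow-box coordinates $\Sigma$ becomes a graph $v=V(u)$ with $V(u)=c\,u^{2k^+}(1+h(u))$, $c\neq 0$, and setting $\psi(u)=u\,(1+h(u))^{1/(2k^+)}$ one has $V=c\,\psi^{2k^+}$, so the equation $V(u_1)=V(u_0)$ forces $\psi(u_1)=-\psi(u_0)$, i.e.\ $\varphi^+$ is analytically conjugate to $\psi^{-1}\circ(-\mathrm{id})\circ\psi$. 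This is essentially your ``alternative, cleaner route'' via the normal form for an invisible $2k^+$-tangency, and it is the version I would recommend you actually write out.

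One imprecision to fix in your primary outline: the Puiseux substitution should be $v=s^{2k^+}$, not $v=s^{k^+}$, since the two real branches of $\{g=0\}$ over $v$ behave like $u\sim \pm v^{1/(2k^+)}$ (the contact order is $2k^+$). More to the point, this detour through $v$ is unnecessary and is what makes the argument look delicate: $\varphi^+$ is naturally a self-map of $\Sigma$ in its own analytic parameter (the original $x$, or equivalently $u$ after flow-box), not a function of $v$, so once you stay in that parametrization no fractional powers ever appear and the Morse-type factoring above delivers analyticity directly, without invoking Weierstrass preparation or Newton--Puiseux.
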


 In \cite{gassulcoll}, the authors proved Theorem \ref{teo:main} assuming $k^{\pm}=1$ by means of {\it Generalized Polar Coordinates}. In Section  \ref{proof:ta}, we shall adapt their proof in order to obtain Theorem \ref{teo:main}. In the proof of Theorem \ref{teo:main}, it will be clear that if we impose $Z^{+}$ (resp. $Z^{-}$) to be $C^{r},$ $1\leq r\leq \infty,$ instead of analytic, then the half-return map $\varphi^{+}$ (resp.  $\varphi^{-}$) would be $C^r$ around $x=0.$ 

\bigskip

 Here, instead of working with the first-return map, we consider the {\it displacement function} 
$
\Delta(x)=\delta(\varphi^+(x)-\varphi^-(x))
$.
Assuming the analyticity of the vector fields $Z^{+}$ and $Z^{-},$ from Theorem \ref{teo:main} and taking into account that ${\varphi}^{+}$ and  ${\varphi}^{-}$ are involutions, we have
\begin{equation}\label{eq:series}
\begin{aligned}
{\varphi}^{\pm}(x) = -x + \alpha_2^{\pm}x^2 + \alpha_3^{\pm}x^3 +\cdots = -x + \sum_{n=2}^{\infty}\alpha_n^{\pm} x^n. \
\end{aligned}
\end{equation}
Thus,
\[
\Delta(x)= \sum_{n=2}^{\infty}V_n x^n,
\]
where $V_n=\delta(\al_n^+-\alpha_n^-),$ for $n\geq 2.$ Notice that \eqref{sistemainicial} has a center at the origin if, and only if, the displacement function is identically zero, equivalently, $V_n=0$ for every integer $n\geq 2.$ Hence, if there exists $n \in \mathbb{N}$ such as $V_n\neq0,$ then \eqref{sistemainicial} has a focus at the origin. In addition, if $n_0$ is the first index such that  $V_{n_0} \neq 0,$ then the origin is asymptotically stable (resp. unstable) provided that $V_{n_0}<0$ (resp. $V_{n_0}>0$). Accordingly, it is natural to define $n$th-Lyapunov coefficient of a $(2k^+,2k^-)$-monodromic tangential singularity by $V_n$ as above.

\smallskip

For non-degenerated monodromic singularities of planar smooth vector fields, the index of the first non-vanishing Lyapunov coefficient is always odd (see \cite{romanovski2009center}). In \cite[Theorem B(c)]{gassulcoll}, for $(2,2)$-monodromic tangential singularity, it was showed that $V_3=0$ provided that $V1=V_2=0$. Here, as a consequence of an involutive property of the half-return maps (see Proposition \ref{prop1} of Section \ref{proof:tb}), our second main result  generalizes  \cite[Theorem B(c)]{gassulcoll} and establishes that the index of the first non-vanishing Lyapunov coefficient of a $(2k^+,2k^-)$-monodromic tangential singularity is always even:

\begin{mtheorem}\label{teo:oddindex}
Consider the Filippov vector field $Z$ given by \eqref{sistemainicial} and  suppose that the vector fields $Z^{+}$ and $Z^{-}$ are analytic. Assume that $Z$ has a $(2k^+,2k^-)$-monodromic tangential singularity at the origin, for positive integers $k^+$ and $k^-.$ If $V_n=0$ for every $n=2,\ldots,2\ell,$ then $V_{2\ell+1}=0.$
\end{mtheorem}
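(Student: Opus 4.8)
The plan is to deduce Theorem~\ref{teo:oddindex} from a purely formal statement about pairs of analytic involutions, which isolates precisely where the involutive property of $\varphi^{\pm}$ enters (and makes clear that the values of $k^{\pm}$ play no role beyond guaranteeing, via Theorem~\ref{teo:main}, that $\varphi^{\pm}$ are analytic involutions). First I would record the following lemma: if $\varphi,\psi$ are analytic near $0\in\R$ with $\varphi(0)=\psi(0)=0$, $\varphi'(0)=\psi'(0)=-1$, $\varphi\circ\varphi=\psi\circ\psi=\mathrm{id}$, and $\psi\not\equiv\varphi$, then the first nonvanishing Taylor coefficient of $\psi-\varphi$ (necessarily of order $\geq2$, since the two maps share their $0$th and $1$st order parts) occurs at an \emph{even} order. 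Granting the lemma, Theorem~\ref{teo:oddindex} follows at once: by \eqref{eq:series} both half-return maps satisfy the lemma's hypotheses, and since $V_n=\delta(\alpha_n^+-\alpha_n^-)$ with $\delta=\pm1$, the hypothesis that $V_i$ vanishes for all indices up to $2\ell$ is equivalent to $\varphi^+(x)-\varphi^-(x)=O(x^{2\ell+1})$. If $\varphi^+\equiv\varphi^-$ near $0$, every $V_n$ vanishes and we are done; otherwise the first nonvanishing coefficient of $\varphi^+-\varphi^-$ sits at some order $m\geq2\ell+1$, which by the lemma is even, hence $m\neq2\ell+1$ and in particular $V_{2\ell+1}=0$.

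To prove the lemma I would argue by contradiction, writing $\rho=\psi-\varphi$, so $\rho(x)=c\,x^{m}+O(x^{m+1})$ with $c\neq0$ and $m\geq2$, and exploit $\psi\circ\psi=\mathrm{id}$ by expanding it around $\varphi$. Using $\varphi\circ\varphi=\mathrm{id}$ and Taylor's formula,
\[
x=\psi(\psi(x))=\varphi\bigl(\varphi(x)+\rho(x)\bigr)+\rho\bigl(\varphi(x)+\rho(x)\bigr)=x+\varphi'(\varphi(x))\,\rho(x)+\rho(\varphi(x))+E(x),
\]
where $E(x)$ collects the term $\rho'(\varphi(x))\rho(x)$ together with all contributions that are at least quadratic in $\rho$ coming from the two expansions. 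Since $\rho=O(x^m)$, $\rho'=O(x^{m-1})$ and $m\geq2$, one gets $E(x)=O(x^{2m-1})=O(x^{m+1})$, so that
\[
\varphi'(\varphi(x))\,\rho(x)+\rho(\varphi(x))=O(x^{m+1}).
\]
Then I would read off the coefficient of $x^{m}$: from $\varphi(0)=0$, $\varphi'(0)=-1$ we have $\varphi'(\varphi(x))=-1+O(x)$, hence $\varphi'(\varphi(x))\rho(x)=-c\,x^m+O(x^{m+1})$; and from $\varphi(x)=-x+O(x^2)$ we get $\rho(\varphi(x))=c(-1)^{m}x^{m}+O(x^{m+1})$. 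Thus $c\bigl((-1)^{m}-1\bigr)=0$, and since $c\neq0$ this forces $(-1)^{m}=1$, i.e.\ $m$ is even.

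The argument is short and the computations are routine power-series bookkeeping; the one point that genuinely needs care — and essentially the only place that could hide an error — is the order estimate $E(x)=O(x^{m+1})$, which is exactly where the hypothesis $m\geq2$ (equivalently, the fact that the two involutions already agree up to the linear term) is consumed. I do not foresee a real obstacle. A cosmetic alternative would be to invoke the classical normal form $\varphi=\sigma^{-1}\circ(-\mathrm{id})\circ\sigma$ with $\sigma(x)=\tfrac{1}{2}\bigl(x-\varphi(x)\bigr)$ and to compare the two conjugating maps $\sigma^{\pm}$, but the direct perturbative computation above appears to be the most economical route and is the one I would write up.
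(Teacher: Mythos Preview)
Your proposal is correct. Both you and the paper reduce Theorem~\ref{teo:oddindex} to a general statement about a pair of involutions agreeing up to a given order, and both arguments ultimately rest on the same mechanism: the involution identity forces the relation $c\bigl((-1)^m-1\bigr)=0$ at the first order $m$ where the two maps differ. The difference lies only in how the key lemma is executed. The paper works with each involution \emph{separately}, applying Fa\`{a} di Bruno's formula to $\varphi\circ\varphi=\mathrm{id}$ to derive the triangular recursion $((-1)^n-1)\,\varphi^{(n)}(0)=S\bigl(\varphi'(0),\ldots,\varphi^{(n-1)}(0)\bigr)$ for an explicit polynomial $S$ built from partial Bell polynomials, and then notes that the identical recursion for $\psi$ forces $\varphi^{(2\ell+1)}(0)=\psi^{(2\ell+1)}(0)$ once the lower-order derivatives match. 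Your perturbative route---writing $\psi=\varphi+\rho$ and expanding $\psi\circ\psi=\mathrm{id}$ to first order in $\rho$---bypasses the Bell-polynomial machinery entirely and extracts the same conclusion more directly; the error estimate $E(x)=O(x^{2m-1})$ you flag is indeed the only point requiring care, and it holds for the reasons you give. The paper's approach has the minor byproduct (not used here) that for a \emph{single} involution every odd-order derivative at $0$ is determined by the preceding ones; your argument trades that for brevity.
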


Theorem \ref{teo:oddindex} is proven in Section \ref{proof:tb}.

\bigskip

Our third main result provides a recursive formula for computing the coefficients  $\alpha^{+}_n$ and $\alpha^{-}_n$ of the series \eqref{eq:series} of the half-return maps $\varphi^{+}$ and $\varphi^{-}$ and, consequently, the Lyapunov coefficients $V_n$'s. 

Consider the value
\begin{equation} \label{value:a}
a^{\pm}=\dfrac{1}{(2k^{\pm}-1)!|X^{\pm}(0,0)|}\dfrac{\partial^{2k^{\pm}-1}Y^{\pm}}{\partial x^{2k^{\pm}-1}}(0,0).\end{equation}
Notice that, from condition {\bf C3}, $\delta a^{\pm}<0.$ Also, define the functions
\begin{equation}\label{auxfunc}
\begin{array}{l}
f^{\pm}(x)=\dfrac{\pm \delta Y^{\pm}(x,0)-a^{\pm}x^{2k^{\pm}-1} X^{\pm}(x,0)}{x^{2k^{\pm}} X^{\pm}(x,0)}\quad\text{and}\vspace{0.2cm}\\
g^{\pm}(x,y)=\dfrac{\pm X^{\pm}(x,0)Y^{\pm}(x,y) \mp X^{\pm}(x,y)Y^{\pm}(x,0)}{y \delta X^{\pm}(x,y)X^{\pm}(x,0)}.
\end{array}
\end{equation}

\begin{mtheorem} \label{teo:coef}
Consider the Filippov vector field $Z$ given by \eqref{sistemainicial} and  suppose that the vector fields $Z^{+}$ and $Z^{-}$ are analytic. Assume that $Z$ has a $(2k^+,2k^-)$-monodromic tangential singularity at the origin, for positive integers $k^+$ and $k^-.$ Then, the functions $f^{\pm}$ and $g^{\pm}$ are analytic in a neighborhood of $x=y=0.$ Also, consider the sequence of functions $y_i^{+}$ and $y_i^{-}$ defined, in a neighborhood of $x=0,$ recursively by
\begin{equation}\label{yi}
\begin{aligned}
y_1^{\pm} (x)=& a^{\pm} x^{2k^{\pm}-1}+x^{2k^{\pm}}f^{\pm}(x),&\\
y_i^{\pm} (x) = & (\pm \delta)^{i-1}\left( a^{\pm} \dfrac{(2k^{\pm}-1)!}{(2k^{\pm}-i)!}x^{2k^{\pm}-i}  +\sum_{l=0}^{i-1} {{i-1}\choose{l}}  \dfrac{(2k^{\pm})!}{(2k^{\pm}-l)!}x^{2k^{\pm}-l} {f^{\pm}}^{(i-1-l)} (x)\right) & \\  & +\sum_{l=1}^{i-1} \sum_{j=1}^{l}j {i-1\choose l} (\pm \delta)^{i-l-1}B_{l,j}(y^{\pm}_1(x),\dots , y^{\pm}_{l-j+1}(x))  \dfrac{\partial^{j+i-l-2}g^{\pm}}{\partial x^{i-l-1} \partial y^{j-1}}(x,0), \text{ if } 2\leq i\leq 2k^{\pm}, \\
y_i^{\pm}(x) = & (\pm\delta)^{i-1} \Bigg(  {{i-1}\choose{2k^{\pm}}}(2k^{\pm})!{f^{\pm}}^{i-1-2k^{\pm}}(x)   +\sum_{l=0}^{2k^{\pm}-1} {{i-1}\choose{l}}  \dfrac{(2k^{\pm})!}{(2k^{\pm}-l)!}x^{2k^{\pm}-l}{f^{\pm}}^{(i-l-1)}(x)\Bigg) &  \\  &+ \sum_{l=1}^{i-1}\sum_{j=1}^{l}j {{i-1}\choose{l}} (\pm \delta)^{i-l-1}B_{l,j}(y_1^{\pm}(x),\dots , y^{\pm}_{l-j+1}(x)) \dfrac{\partial^{j+i-l-2}g^{\pm}}{\partial x^{i-l-1} \partial y^{j-1}}(x,0),\,\,  \text{ if }\, i>2k^{\pm}. 
\end{aligned}
\end{equation}
Then, the coefficients  $\alpha^{\pm}_n$ of the series \eqref{eq:series} of the half-return maps $\varphi^{\pm}$ are given recursively by
\begin{equation}\label{alpha}
\begin{cases}
\alpha^{\pm}_1= -1,\vspace{0.1cm}\\
\alpha^{\pm}_n=\dfrac{p^{\pm}_{n,k^{\pm}}(\alpha^{\pm} _1, \alpha^{\pm} _2, \cdots \alpha^{\pm} _{n-1}) - \mu^{\pm}_{n + 2k^{\pm} -1}}{2k^{\pm} \mu^{\pm}_{2k^{\pm}}}, 
\end{cases}
\end{equation}
where 
\[
p^{\pm}_{n,k^{\pm}}\big(\alpha_1,\ldots,\alpha_{n-1}\big)= \mu^{\pm}_{2k^{\pm}} \hat B_{n+2k^{\pm}-1,2k}\big(\alpha_1,\ldots,\alpha_{n -1},0\big) +  \sum_{i=2k^{\pm}+1}^{n+2k^{\pm}-1} \mu^{\pm}_i \hat B_{n+2k^{\pm}-1,i}\big(\alpha_1,\ldots,\alpha_{n+2k^{\pm}-i}\big),
\]
and
\begin{equation}\label{mui}
\mu_i^{\pm}= \dfrac{1}{i!}\sum_{j=1}^{i} (\mp \delta)^j {{i}\choose{j}} (y_j^\pm)^{(i-j)}(0).
\end{equation}
\end{mtheorem}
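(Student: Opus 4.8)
The plan is to reduce the computation of the $\alpha_n^{\pm}$ to the analysis of a single analytic function $\psi^{\pm}$ on $\Sigma$ --- an ``orbit coordinate'' obtained by following the trajectories of $Z^{\pm}$ to the transversal section $\{x=0\}$ --- whose power series coefficients turn out to be exactly the numbers $\mu_i^{\pm}$ of \eqref{mui}; the functions $y_i^{\pm}$ of \eqref{yi} will appear as the intermediate Taylor data of the flow needed to compute them.

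\emph{Analyticity of $f^{\pm}$ and $g^{\pm}$.} Since $X^{\pm}(0,0)\neq0$, the denominators in \eqref{auxfunc} are $x^{2k^{\pm}}$ (resp.\ $y$) times an analytic nonvanishing factor, so it suffices to examine the numerators. By \textbf{C1} the function $Y^{\pm}(x,0)$ vanishes to order $2k^{\pm}-1$ at $x=0$, and comparing its leading coefficient with the definition \eqref{value:a} of $a^{\pm}$ (using \eqref{delta}) one finds that $\pm\delta Y^{\pm}(x,0)$ and $a^{\pm}x^{2k^{\pm}-1}X^{\pm}(x,0)$ have the same $(2k^{\pm}-1)$-jet; hence the numerator of $f^{\pm}$ vanishes to order $\geq2k^{\pm}$ and $f^{\pm}$ is analytic. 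The numerator of $g^{\pm}$ vanishes identically on $\{y=0\}$, hence is analytically divisible by $y$, so $g^{\pm}$ is analytic.

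\emph{Identification of the $y_i^{\pm}$.} Near the origin I would reparametrize the orbits of $Z^{\pm}$ by a new time $\tau$ with $\tfrac{dx}{d\tau}=\pm\delta$; this is orientation preserving since $\tfrac{d\tau}{dt}=\pm\delta X^{\pm}>0$ there. A direct computation from \eqref{value:a}--\eqref{auxfunc} turns the $y$-equation along orbits into
\[
\frac{dy}{d\tau}=y_1^{\pm}(x)+y\,g^{\pm}(x,y),\qquad x=x_0\pm\delta(\tau-\tau_0),
\]
with $y_1^{\pm}$ as in \eqref{yi}. Differentiating this relation $i-1$ times with respect to $\tau$ --- using that $x(\tau)$ is affine, so that each $\partial_x$ contributes a factor $\tfrac{dx}{d\tau}=\pm\delta$; the multivariate Fa{\`a} di Bruno formula to expand the $\tau$-derivatives of $g^{\pm}(x(\tau),y(\tau))$, whose products of $y',y'',\dots$ assemble into the partial Bell polynomials $B_{l,j}$; and the identity $\partial_y^{\,j}(y\,g^{\pm})|_{y=0}=j\,\partial_y^{\,j-1}g^{\pm}|_{y=0}$ --- one verifies by induction on $i$ that \eqref{yi} computes exactly
\[
y_i^{\pm}(x)=\frac{d^{\,i}y}{d\tau^{\,i}}\bigg|_{y=0},
\]
the restriction to $\Sigma$ of the $i$-th $\tau$-derivative of $y$ along the orbits of $Z^{\pm}$. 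The split into the cases $i\leq2k^{\pm}$ and $i>2k^{\pm}$ is precisely the bookkeeping of $(a^{\pm}x^{2k^{\pm}-1})^{(i-1)}$ and of $(x^{2k^{\pm}}f^{\pm}(x))^{(i-1)}$, whose $x$-powers become constant, resp.\ zero, once $i-1\geq2k^{\pm}$. I expect this inductive identification to be the only genuine obstacle; the rest is formal power series manipulation.

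\emph{The function $\psi^{\pm}$ and the recursion for $\alpha_n^{\pm}$.} Let $\psi^{\pm}(x)$ be the $y$-coordinate at which the orbit of $Z^{\pm}$ through $(x,0)$ meets $\{x=0\}$; equivalently $\psi^{\pm}(x)=u(0)$, where $u(s)$ solves $u'=Y^{\pm}(s,u)/X^{\pm}(s,u)$ (well defined since $X^{\pm}(0,0)\neq0$) with $u(x)=0$, so $\psi^{\pm}$ is analytic with $\psi^{\pm}(0)=0$ by analytic dependence on initial conditions. That orbit reaches $\{x=0\}$ at $\tau$-time $\mp\delta x$, so Taylor-expanding $y$ in $\tau$ and invoking the previous step gives $\psi^{\pm}(x)=\sum_{r\geq1}\tfrac1{r!}\,y_r^{\pm}(x)(\mp\delta x)^{r}$; expanding each $y_r^{\pm}$ in powers of $x$ and collecting terms yields $\psi^{\pm}(x)=\sum_{i\geq1}\mu_i^{\pm}x^{i}$ with $\mu_i^{\pm}$ as in \eqref{mui}. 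From the leading behaviour $y_r^{\pm}(x)=(\pm\delta)^{r-1}a^{\pm}\tfrac{(2k^{\pm}-1)!}{(2k^{\pm}-r)!}x^{2k^{\pm}-r}+\cdots$ $(r\leq2k^{\pm})$ and the identity $\sum_{r=1}^{2k^{\pm}}(-1)^{r-1}\binom{2k^{\pm}}{r}=1$ one obtains $\mu_1^{\pm}=\cdots=\mu_{2k^{\pm}-1}^{\pm}=0$ and $2k^{\pm}\mu_{2k^{\pm}}^{\pm}=\mp\delta a^{\pm}\neq0$ (since $\delta a^{\pm}<0$), so $\psi^{\pm}$ has a critical point of order $2k^{\pm}$ at $0$ and the denominator in \eqref{alpha} is nonzero. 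Finally, the orbit through $(x,0)$ and the orbit through $(\varphi^{\pm}(x),0)$ are the same arc, hence meet $\{x=0\}$ at the same point, so $\psi^{\pm}\circ\varphi^{\pm}=\psi^{\pm}$ near $0$. Substituting $\varphi^{\pm}(x)=-x+\sum_{n\geq2}\alpha_n^{\pm}x^{n}$ (legitimate by Theorem~\ref{teo:main} and \eqref{eq:series}) into $\sum_{m\geq2k^{\pm}}\mu_m^{\pm}\varphi^{\pm}(x)^{m}=\sum_{m\geq2k^{\pm}}\mu_m^{\pm}x^{m}$, using $\varphi^{\pm}(x)^{m}=\sum_{N\geq m}\hat B_{N,m}(\alpha_1^{\pm},\dots,\alpha_{N-m+1}^{\pm})x^{N}$ and equating the coefficients of $x^{\,n+2k^{\pm}-1}$, I would isolate the unique term containing $\alpha_n^{\pm}$ by means of $\hat B_{N,2k^{\pm}}(\alpha_1,\dots,\alpha_n)=2k^{\pm}\alpha_1^{2k^{\pm}-1}\alpha_n+\hat B_{N,2k^{\pm}}(\alpha_1,\dots,\alpha_{n-1},0)$ together with $\alpha_1^{\pm}=-1$; solving for $\alpha_n^{\pm}$ produces exactly \eqref{alpha} with $p^{\pm}_{n,k^{\pm}}$ as stated, which completes the proof.
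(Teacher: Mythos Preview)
Your proposal is correct and follows essentially the same route as the paper: both pass to the canonical form with $\dot x=\pm\delta$, identify $y_i^{\pm}(x)$ as the $i$-th time-derivative of $y$ along the orbit at $\Sigma$ (the paper's Lemma~\ref{lemma:fundamental}), introduce the transversal map $\psi^{\pm}=\mu^{\pm}$ to $\{x=0\}$, compute its Taylor coefficients as \eqref{mui}, and derive \eqref{alpha} from $\mu^{\pm}\circ\varphi^{\pm}=\mu^{\pm}$ via the multinomial/ordinary Bell expansion. Your argument even adds the explicit check $2k^{\pm}\mu^{\pm}_{2k^{\pm}}=\mp\delta a^{\pm}\neq0$, which the paper uses but does not spell out.
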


Theorem \ref{teo:coef} is proven in Section \ref{proof:coef}.

In the recurrences above, we are using the concept of {\it partial Bell polynomials} and  {\it ordinary Bell polynomials} (see, for instance \cite{comtet}), which for positive integers $p$ and $q$ are defined, respectively, as follows
\begin{equation}\label{bell}
\begin{array}{l}
\displaystyle B_{p,q}(x_1,\ldots,x_{p-q+1})=\sum\dfrac{p!}{b_1!\,b_2!\cdots b_{p-q+1}!}\prod_{j=1}^{p-q+1}\left(\dfrac{x_j}{j!}\right)^{b_j} \text{ and }\vspace{0.2cm}\\
\displaystyle \hat B_{p,q}(x_1,\ldots,x_{p-q+1})=\sum\dfrac{p!}{b_1!\,b_2!\cdots b_{p-q+1}!}\prod_{j=1}^{p-q+1}x_j^{b_j}.
\end{array}
\end{equation}
The sums above are taken  
over all the $(p-q+1)$-tuple of nonnegative integers $(b_1,b_2,\cdots,b_{p-q+1})$ satisfying $b_1+2b_2+\cdots+(p-q+1)b_{p-q+1}=p,$ and
$b_1+b_2+\cdots+b_{p-q+1}=q.$ Notice that
\[
 \hat B_{p,q}(x_1,\ldots,x_{p-q+1})=\dfrac{q!}{p!}B_{p,q}(1!x_1,\ldots,(p-q+1)!x_{p-q+1}).
\]
It is worth mentioning that partial Bell polynomials are implemented in algebraic manipulators as Mathematica and Maple. Based on Theorem \ref{teo:coef}, we provide Appendix A containing an implemented Mathematica algorithm for computing the Lyapunov coefficients.  

\bigskip

In the following corollary, applying Theorem \ref{teo:coef}, we compute $\alpha^{\pm}_n$, for $n=1,2,3,4,$  for a general $(2k^+,2k^-)$-monodromic tangential singularity. Recall that the $i$th-Lyapunov coefficient is given by $V_n=\delta(\al_n^+-\alpha_n^-).$

\begin{corollary}\label{cor} Assume that the Filippov vector field \eqref{sistemainicial} has a $(2k^+,2k^-)$-monodromic tangential singularity at the origin, for positive integers $k^+$ and $k^-,$ and denote 
	
	\[
	f^{\pm}(x)=\sum_{i=0}^{\infty} f_i^{\pm}\, x^i\quad \text{and} \quad g^{\pm}(x,y)=\sum_{i=0}^{\infty} \sum_{j=0}^{\infty}  g_{i,j}^{\pm}\, x^i y^j.
	\]
	
	Then,  the first four coefficients $\alpha^{\pm}_n$'s of the series \eqref{eq:series} of the half-return maps $\f^{\pm}$  are given by 
	\[
	\begin{aligned}
	\alpha_1^{\pm}=&  -1,\quad 
	\alpha_2^{\pm}=  \dfrac{-2f_0 \pm 2 \delta a g_{0,0}}{2a k + a},\quad \alpha_3^{\pm}= -(\alpha_2^{\pm})^2, \vspace{0.2cm}\\
	\alpha_4^{\pm}=& \dfrac{4({k} (2 {k}+3)+7) (-f_0 \pm \delta {a}   {g_{0,0}})^3}{3{a}^3(2 {k}+1)^3} \mp  \dfrac{ 12 \delta a(f_0\mp \delta {a}   {g_{0,0}}) \left({a} \left({g_{1,0}} \mp \delta  {g_{0,0}}^2\right)+2 f_0 {g_{0,0}} \mp 2 \delta  {f_1}\right)}{3{a}^3(8{k}+4)} \vspace{0.2cm}\\  &\pm  \dfrac{ 4\delta {a}^2 \left({a} \left({2g_{2,0}}+{g_{0,0}}^3\right)+6 {g_{0,0}} {f_1}+3 f_0 {g_{1,0}}\mp 3 \delta  \left({a} {g_{1,0}} {g_{0,0}}+f_0 {g_{0,0}}^2+{2f_2}\right) \right)}{3 {a}^3 (8 {k}+12)}+\xi_k.
	\end{aligned}
	\]
	where $\xi_1=- \dfrac{4 {a} {g_{0,1}}}{15}$ and $\xi_k=0$ for $k>1.$ For the sake of simplicity, in the above expressions we are dropping the sign $\pm$ from $a^{\pm}, k^{\pm}, f_i^{\pm},$ and $g_{i,j}^{\pm}.$
\end{corollary}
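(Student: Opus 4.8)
The plan is to specialize Theorem \ref{teo:coef} to $n=1,2,3,4$; the whole task is then to run the recursions in \eqref{yi}, \eqref{mui} and \eqref{alpha} to the required order and collect terms. Two of the four coefficients cost almost nothing. The value $\alpha^{\pm}_1=-1$ is the initial condition in \eqref{alpha}. The relation $\alpha^{\pm}_3=-(\alpha^{\pm}_2)^2$ does not even need the bulk of Theorem \ref{teo:coef}: since $\varphi^{\pm}$ is an involution fixing the origin (see \cite{AndGomNov19} and Theorem \ref{teo:main}) with expansion \eqref{eq:series}, substituting \eqref{eq:series} into $\varphi^{\pm}(\varphi^{\pm}(x))=x$ and reading off the coefficient of $x^3$ gives $-2\big(\alpha^{\pm}_3+(\alpha^{\pm}_2)^2\big)=0$; this is the $\ell=1$ instance of the mechanism behind Theorem \ref{teo:oddindex}. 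So the only genuine computations are those for $\alpha^{\pm}_2$ and $\alpha^{\pm}_4$.

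For $\alpha^{\pm}_2$, I would first write down, from \eqref{yi}, the functions $y^{\pm}_1,\dots,y^{\pm}_{2k^{\pm}+1}$ and the finitely many derivatives at $x=0$ that enter \eqref{mui}, Taylor-expanding $f^{\pm}$ and $g^{\pm}$ (and the partial derivatives of $g^{\pm}$ occurring in \eqref{yi}) as in the statement of Corollary \ref{cor}; this produces $\mu^{\pm}_{2k^{\pm}}$ and $\mu^{\pm}_{2k^{\pm}+1}$. In $p^{\pm}_{2,k^{\pm}}$ the ordinary Bell polynomials collapse: $\hat B_{p,p}(x_1)=x_1^{p}$ gives $\hat B_{2k^{\pm}+1,2k^{\pm}+1}(\alpha^{\pm}_1)=(\alpha^{\pm}_1)^{2k^{\pm}+1}=-1$, while the remaining one vanishes since it carries a $0$ in its last slot (recall $\hat B_{p,p-1}(x_1,x_2)=p(p-1)x_1^{p-2}x_2$). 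Substituting into \eqref{alpha} and simplifying yields the stated $\alpha^{\pm}_2$. The step for $\alpha^{\pm}_4$ is the same in spirit but heavier: one needs $\mu^{\pm}_i$ for $2k^{\pm}\le i\le 2k^{\pm}+3$, hence the $y^{\pm}_i$ up to $i=2k^{\pm}+3$, and the assembly of $p^{\pm}_{4,k^{\pm}}$ uses, besides $\hat B_{p,p}$ and $\hat B_{p,p-1}$, the next two ordinary Bell polynomials, evaluated at $\alpha^{\pm}_1=-1$ and at the already-known values $\alpha^{\pm}_2$, $\alpha^{\pm}_3=-(\alpha^{\pm}_2)^2$. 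Solving \eqref{alpha} for $\alpha^{\pm}_4$ and simplifying gives the displayed formula, with the $\pm$ superscripts on $a,k,f_i,g_{i,j}$ dropped as in the statement.

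The one subtlety — and the source of $\xi_k$ — is the case distinction at $i=2k^{\pm}$ built into \eqref{yi}. When $k^{\pm}\ge 2$, the indices $i$ that matter for $\alpha^{\pm}_4$ stay in a range where the summands of \eqref{yi} carrying $\partial_y g^{\pm}$ (hence the coefficient $g^{\pm}_{0,1}$) only feed into $\alpha^{\pm}_n$ with $n\ge 5$, so $\xi_k=0$. When $k^{\pm}=1$, however, the indices $i=3,4,5$ already lie in the branch $i>2k^{\pm}$ of \eqref{yi}, and the terms with $l=j=2$ introduce $2\,y^{\pm}_1(x)^2\,\partial_y g^{\pm}(x,0)=2(a^{\pm})^2 g^{\pm}_{0,1}\,x^2+O(x^3)$ inside $y^{\pm}_3$ (with analogous lower-order contributions in $y^{\pm}_4$ and $y^{\pm}_5$); propagating these through \eqref{mui} into $\mu^{\pm}_5$ and then through \eqref{alpha} with $n=4$ leaves precisely $\xi_1=-\frac{4}{15}a\,g_{0,1}$. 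I expect this bookkeeping — keeping the $(\pm\delta)$-powers, the piecewise definition of the $y^{\pm}_i$, and the partial/ordinary Bell-polynomial factors consistent — to be the only real obstacle; the computation is otherwise mechanical, lives entirely in the ring of formal power series in $x$ (and $y$), and can be, and has been, cross-checked with the Mathematica implementation of Theorem \ref{teo:coef} in Section \ref{sec:alg}.
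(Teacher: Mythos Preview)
Your approach is correct and coincides with the paper's: the corollary is stated there without proof, merely as ``applying Theorem~\ref{teo:coef}'' (with the Mathematica implementation in the appendix as a check), and that is exactly what you do. Your shortcut for $\alpha^{\pm}_3$ via the involution identity is a clean extra that the paper does not spell out. One minor correction to your heuristic: the dichotomy $\xi_1\neq0$ versus $\xi_k=0$ for $k\ge2$ is not really driven by the branch split at $i=2k^{\pm}$ in \eqref{yi} --- the $g$-sum is identical in both branches --- but by straight order-counting in $x$: the $\partial_y g^{\pm}$-contribution to $y^{\pm}_i$ first appears at $x$-order $4k^{\pm}-i+1$, which can reach $\mu^{\pm}_{2k^{\pm}+3}$ (and hence $\alpha^{\pm}_4$) only when $k^{\pm}=1$. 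This framing issue does not affect the validity of your computation.
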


Since, in Corollary \ref{cor}, $k^{\pm}$ are arbitrary, its proof involves some very cumbersome computations when applying the formulae \eqref{yi}-\eqref{mui}. Those computations and the proof of Corollary \ref{cor} are included in Appendix B. Notice that, if $k^{\pm}$ are known, then the formulae \eqref{yi}-\eqref{mui} can be easily algorithmically implemented (see Appendix A).

\subsection{Application: Bifurcation of Limit Cycles}

It is well known that Lyapunov coefficients can be used to study the appearance of small amplitude limit cycles in smooth and non-smooth vector fields around weak focuses (see, for instance, \cite{romanovski2009center} for smooth vector fields and \cite{gassulcoll,GasTor03,GouTor20} for non-smooth vector fields). In this section, we apply the classical ideas to study the appearance of limit cycles around monodromic tangential singularities. We start by providing a Hopf-like bifurcation theorem for monodromic tangential singularities.

\begin{mtheorem}\label{thm:hopf} Let $k^+$ and $k^-$ be positive integers  and let $Z_\lambda$ be a 1-parameter family of Filippov vector fields \eqref{sistemainicial} having a $(2k^+,2k^-)$-monodromic tangential singularity at the origin for every $\lambda$ in an interval $I.$ Let $V_2(\lambda)$ and $V_4(\lambda)$ be, respectively, the second and the forth Lyapunov coefficients. Assume that, for some $\lambda_0\in I,$ $V_2(\lambda_0)=0,$ $d:=V_2'(\lambda_0)\neq0,$ and $\ell:=V_4(\lambda_0)\neq0.$ Then, there exists a neighborhood $J\subset I$ of $\lambda_0$ such that, for every $\lambda\in J$ satisfying  $d\ell(\lambda-\lambda_0)<0,$ the Filippov vector field $Z_\lambda$ admits a hyperbolic limit cycle in a $\sqrt{|\la-\la_0|}$-neighborhood of  the origin. In addition, such a limit cycle is asymptotically stable (resp. unstable) provided that $\ell<0$ (resp. $\ell>0$).  
\end{mtheorem}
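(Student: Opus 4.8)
The plan is to reduce Theorem~\ref{thm:hopf} to the classical one-parameter Hopf bifurcation analysis via the displacement function, exactly as in the smooth center-focus theory but adapted to the fact that the first non-vanishing Lyapunov index is \emph{even} (Theorem~\ref{teo:oddindex}). First I would invoke Theorem~\ref{teo:main} to guarantee that, for every $\lambda\in I$, the half-return maps $\varphi^{\pm}_{\lambda}$ are analytic in $(x,\lambda)$ near $(0,\lambda_0)$ — here one needs the mild observation that the construction in Theorem~\ref{teo:main} depends analytically on parameters when the vector fields $Z^{\pm}_{\lambda}$ do and the non-degeneracy conditions \textbf{C1}--\textbf{C3} persist on $I$, which is part of the hypothesis that the $(2k^+,2k^-)$-monodromic tangential singularity is maintained. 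Consequently the displacement function $\Delta(x,\lambda)=\delta(\varphi^+_{\lambda}(x)-\varphi^-_{\lambda}(x))=\sum_{n\ge 2}V_n(\lambda)x^n$ is analytic in both variables, and by Theorem~\ref{teo:oddindex} the odd coefficients are controlled by the even ones; in particular $V_3(\lambda)=-(\alpha_2^+(\lambda))^2+\ldots$ is a function of $V_2(\lambda)$ that vanishes whenever $V_2(\lambda)=0$, so near $\lambda_0$ one has $V_3(\lambda)=O(V_2(\lambda))$.

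Next I would perform the standard blow-up/rescaling $x=\sqrt{|\lambda-\lambda_0|}\,u$. Writing $\Delta(x,\lambda)=V_2(\lambda)x^2+V_3(\lambda)x^3+V_4(\lambda)x^4+x^5R(x,\lambda)$ with $R$ analytic, and using $V_2(\lambda)=d(\lambda-\lambda_0)+O((\lambda-\lambda_0)^2)$ together with $V_3(\lambda)=O(\lambda-\lambda_0)$, the rescaled displacement becomes
\begin{equation}\label{eq:rescaled}
\frac{\Delta(\sqrt{|\lambda-\lambda_0|}\,u,\lambda)}{(\lambda-\lambda_0)|\lambda-\lambda_0|^{1/2}}=\sgn(\lambda-\lambda_0)\Big(d\,u^2+\sqrt{|\lambda-\lambda_0|}\,\widetilde{V}_3\,u^3+\ell\, u^4\Big)+\sqrt{|\lambda-\lambda_0|}\,\widehat R(u,\lambda),
\end{equation}
for $\lambda$ close to $\lambda_0$, where $\widetilde V_3, \widehat R$ stay bounded. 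Hence for $\lambda\ne\lambda_0$ the zeros $x>0$ of $\Delta(\cdot,\lambda)$ near the origin correspond to zeros $u>0$ of $G(u,\lambda):=\sgn(\lambda-\lambda_0)(d u^2+\ell u^4)+o(1)$. Setting $\lambda=\lambda_0$ formally in \eqref{eq:rescaled} the leading part $\sgn(\lambda-\lambda_0)(du^2+\ell u^4)=\sgn(\lambda-\lambda_0)u^2(d+\ell u^2)$ has a simple positive root $u_*=\sqrt{-d/\ell}$ precisely when $d\ell<0$; this is the source of the condition $d\ell(\lambda-\lambda_0)<0$ — we need $\sgn(\lambda-\lambda_0)$ such that the sign pattern makes $u_*$ a genuine transversal root, i.e. $d/\ell<0$, and the branch is selected on the correct side. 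Then the implicit function theorem applied to the $C^{\omega}$ (after dividing out $u^2$) function $H(u,\lambda)=d+\ell u^2+o(1)$ at $(u_*,\lambda_0)$, using $\partial_u H(u_*,\lambda_0)=2\ell u_*\ne0$, produces for each such $\lambda$ a unique simple zero $u(\lambda)$ near $u_*$, hence a unique limit cycle at distance $\sim\sqrt{|\lambda-\lambda_0|}\,u_*$ from the origin, which is the asserted $\sqrt{|\lambda-\lambda_0|}$-neighborhood.

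For the stability statement I would note that the sign of $\Delta(x,\lambda)$ just inside and just outside this zero is governed, after rescaling, by the sign of $\partial_u H(u_*,\lambda_0)=2\ell u_*$, so the limit cycle is a hyperbolic attractor when $\ell<0$ and a hyperbolic repeller when $\ell>0$; this matches the asymptotic stability convention already fixed in the paper where $V_{n_0}<0$ corresponds to a stable focus. The main obstacle I anticipate is not the bifurcation bookkeeping but rather justifying the joint analyticity (or at least $C^k$-ness with uniform control of the remainder $\widehat R$) of $\Delta$ in $(x,\lambda)$ on a \emph{uniform} neighborhood as $\lambda\to\lambda_0$, and confirming that Theorem~\ref{teo:oddindex} indeed gives the clean relation $V_3=O(V_2)$ near $\lambda_0$ rather than merely ``$V_2=0\Rightarrow V_3=0$'' pointwise; both should follow from the explicit recursive formulas of Theorem~\ref{teo:coef} and Corollary~\ref{cor} (which exhibit $\alpha_3^{\pm}=-(\alpha_2^{\pm})^2$ and hence $V_3$ as an analytic function of lower data vanishing with $V_2$), but the argument needs this dependence spelled out carefully so that the $u^3$-term in \eqref{eq:rescaled} is genuinely $O(\sqrt{|\lambda-\lambda_0|})$ and does not spoil the leading-order picture.
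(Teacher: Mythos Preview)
Your argument is essentially correct, but it proceeds along a different route from the paper's own proof. The paper does not rescale: it factors $\Delta(x;\lambda)=x^{2}\Gamma(x;\lambda)$, observes from Theorem~\ref{teo:oddindex} that $\Gamma(0,\lambda_0)=\partial_x\Gamma(0,\lambda_0)=0$ while $\partial_x^{2}\Gamma(0,\lambda_0)=2\ell\neq0$, and then applies the \emph{Malgrange Preparation Theorem} to write $\Gamma(x;\lambda)=c(x,\lambda)\big(x^{2}+a_{1}(\lambda)x+a_{0}(\lambda)\big)$ with $c(0,\lambda_0)=\ell$, $a_{0}(\lambda_0)=a_{1}(\lambda_0)=0$, and $a_{0}'(\lambda_0)=d/\ell$. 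The positive root of the quadratic factor is then read off directly via the quadratic formula, yielding $x^{*}(\lambda)=\sqrt{-d(\lambda-\lambda_0)/\ell}+O(\lambda-\lambda_0)$ under the sign condition $d\ell(\lambda-\lambda_0)<0$. Your blow-up $x=\sqrt{|\lambda-\lambda_0|}\,u$ followed by the implicit function theorem achieves the same conclusion more elementarily, at the cost of two extra technical points you correctly flagged: you need $V_{3}(\lambda)=O(\lambda-\lambda_0)$ (which does follow, as you note, from Corollary~\ref{cor} via $\alpha_{3}^{\pm}=-(\alpha_{2}^{\pm})^{2}$, giving in fact $V_{3}=-(\alpha_{2}^{+}+\alpha_{2}^{-})V_{2}$), and you must treat the non-smoothness of $\sqrt{|\lambda-\lambda_0|}$ at $\lambda_0$ by reparametrizing in $\varepsilon=\sqrt{|\lambda-\lambda_0|}$ before invoking the implicit function theorem. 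The Malgrange route sidesteps both issues, needing only the pointwise vanishing $V_{3}(\lambda_0)=0$ and producing a globally smooth factorization in $(x,\lambda)$; your route avoids the preparation theorem entirely and makes the $\sqrt{|\lambda-\lambda_0|}$ scale of the limit cycle transparent from the outset.
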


Theorem \ref{thm:hopf} is proven in Section \ref{sec:limc}.

\begin{example}
Let $k^+$ and $k^-$ be positive integers, $\la\in\R,$   and consider the following 1-parameter family of Filippov vector fields:

\begin{equation} \label{ex1}
Z_{\lambda}(x,y)= \begin{cases}
\Big(1, -x^{2k^+-1}(\lambda\,x+1) \Big),&  y > 0, \\
\Big(-1,x^{2k^--1}(x-1) \Big),&  y < 0.
\end{cases}
\end{equation}

Notice that the origin is a $(2k^+,2k^-)$-monodromic tangential singularity for every $\lambda\in\R.$
From Corollary \ref{cor}, we compute
\[
V_2(\lambda)=-\dfrac{2}{1+2k^+}\lambda+\dfrac{2}{1+2k^-} \,\text{ and }\,
V_4(\lambda)=-\dfrac{
	4 (7 + k^+ (3 + 2 k^+)) \lambda^3}{3 (1 + 2 k^+)^3}\lambda^3-\dfrac{4 (7 + k^- (3 + 2 k^-))}{3 (1 + 2 k^-)^3}.  
\]
Thus, for $\lambda_0=\dfrac{1+2k^+}{1+2K^-},$ we have
\[
V_2(\lambda_0)=0,\,\, d=V_2'(\lambda_0)=-\dfrac{2}{1+2k^+}<0,\, \text{ and } \, \ell=V_4(\lambda_0)=-\dfrac{8 (7 + k^+ (3 + 2 k^-))}{3 (1 + 2 k^-)^3}< 0.
\]
Since $\sgn(\ell)=-1,$ Theorem \ref{thm:hopf} implies that, the Filippov vector field \eqref{ex1} admits an asymptotically stable hyperbolic limit cycle for every $\lambda$ sufficiently close to $\lambda_0.$ Such a limit cycle converges to the origin as $\lambda$ goes to $\lambda_0.$
\end{example}

\begin{example}
Let $k$ be a positive integer, $\la\in\R,$ and consider the following 1-parameter family of Filippov vector fields:

\begin{equation} \label{ex2}
Z_{\lambda}(x,y)= \begin{cases}
\Big(1, x^{2k-1}(\lambda\,x-1)+y \Big),&  y > 0, \\
\Big(-1,x^{2k-1}(x-1) \Big),& y < 0,
\end{cases}
\end{equation}

Notice that the origin is a $(2k,2k)$-monodromic tangential singularity for every $\lambda\in\R.$

From Corollary \ref{cor}, we compute
\[
\begin{aligned}
V_2(\lambda)=&\dfrac{2\lambda}{1+2k}\quad \text{ and }\vspace{0.2cm}\\
V_4(\lambda)=&\dfrac{1}{3(3+2k)(1+2k)^3}\Big(-4 (2 + k) (1 + 2 k)^2 + 12 (19 + 14 k) \lambda + 
6 (3 + 2 k) (13 + 2 k) \lambda^2 \\ &+ 
4 (3 + 2 k) (7 + k (3 + 2 k)) \lambda^3 \Big).
\end{aligned}
\]
Thus, for $\lambda_0=0,$ we have
\[
V_2(\lambda_0)=0,\,\, d=V_2'(\lambda_0)=\dfrac{2}{1+2k}>0,\, \text{ and } \, \ell=V_4(\lambda_0)=\dfrac{-4(2+k)}{9+12k(2+k)}< 0.
\]
Theorem \ref{thm:hopf} implies that the Filippov vector field \eqref{ex2} admits an asymptotically stable hyperbolic limit cycle for every $\lambda>\lambda_0$ sufficiently close to $\lambda_0.$ Such a limit cycle converges to the origin as $\lambda$ goes to $\lambda_0.$

\end{example}

Theorem \ref{thm:hopf}  can be generalized as follows:

\begin{mtheorem}\label{thm:genhopf} Let $k^+$ and $k^-$ be positive integers  and let $Z_\Lambda$ be an $n$-parameter family of Filippov vector fields \eqref{sistemainicial} having a $(2k^+,2k^-)$-monodromic tangential singularity at the origin for every $\Lambda$ in an open set $U\subset\R^n.$ Let $V_{2i}(\Lambda)$ be the $2i-$th Lyapunov coefficient, for $i=1,2\dots,n+1,$ and denote $\mathcal V_n=(V_2,V_4,\ldots,V_{2n}):U\rightarrow \R^n.$ Assume that, for some $\Lambda_0\in U,$ $\mathcal V_n(\Lambda_0)=0,$ $\det (D\mathcal V_n(\Lambda_0))\neq0,$ and $V_{2n+2}(\Lambda_0)\neq0.$ Then, there exists an open set $W\subset U$ such that $Z_\Lambda$ has $n$ hyperbolic limit cycles for every $\Lambda\in W.$ In addition, all the limit cycles converge to the origin as $\Lambda$ goes to $\Lambda_0.$
\end{mtheorem}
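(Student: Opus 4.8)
The plan is to reduce Theorem~\ref{thm:genhopf} to a finite-dimensional transversality argument built on top of Theorem~\ref{thm:hopf}'s single-parameter mechanism. First I would recall that, by Theorem~\ref{teo:coef} (and the analyticity of $\varphi^\pm$ from Theorem~\ref{teo:main}), the displacement function is analytic in $x$ with coefficients $V_n(\Lambda)$ depending analytically on $\Lambda$, and by Theorem~\ref{teo:oddindex} the odd-index coefficients $V_{2\ell+1}$ vanish whenever $V_2=\cdots=V_{2\ell}=0$. Consequently, along the set where $\mathcal V_n(\Lambda)=0$, the displacement function has the form $\Delta(x;\Lambda)=V_{2n+2}(\Lambda)x^{2n+2}+o(x^{2n+2})$, so the origin is an unstable (resp.\ stable) focus there. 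The strategy is then the classical Chebyshev-system/rolling argument: perturb $\Lambda$ so that $V_2,V_4,\dots,V_{2n}$ become small and change signs in a prescribed alternating pattern, forcing $\Delta(\cdot;\Lambda)$ to have $n$ simple sign changes on a small interval $(0,\rho)$, hence $n$ simple zeros, i.e.\ $n$ limit cycles.

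The key steps, in order, are as follows. Step 1: set up the scaling $x=\sqrt{|\Lambda-\Lambda_0|}\,u$ (or work directly with the ordered real zeros of $\Delta$) and use the analyticity plus $\mathcal V_n(\Lambda_0)=0$, $V_{2n+2}(\Lambda_0)\neq0$ to write $\Delta(x;\Lambda)=x^2\big(V_2(\Lambda)+V_4(\Lambda)x^2+\cdots+V_{2n}(\Lambda)x^{2n-2}+V_{2n+2}(\Lambda)x^{2n}+x^{2n+2}R(x;\Lambda)\big)$ with $R$ analytic and bounded near $(0,\Lambda_0)$; note only even powers up to order $2n+2$ matter because of Theorem~\ref{teo:oddindex}. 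Step 2: since $\det D\mathcal V_n(\Lambda_0)\neq0$, the map $\mathcal V_n$ is a local diffeomorphism from a neighbourhood of $\Lambda_0$ onto a neighbourhood of $0\in\R^n$; hence we may choose $\Lambda$ so that $(V_2,\dots,V_{2n})$ equals any prescribed small vector $(\varepsilon_1,\dots,\varepsilon_n)$. Step 3: choose the $\varepsilon_j$ with rapidly decreasing magnitudes $|\varepsilon_1|\gg|\varepsilon_2|\gg\cdots\gg|\varepsilon_n|$ and with signs alternating relative to $\mathrm{sign}(V_{2n+2}(\Lambda_0))$; a standard estimate on the polynomial part $V_2+V_4 x^2+\cdots+V_{2n+2}x^{2n}$ (a polynomial in $w:=x^2$ of degree $n$ with controlled coefficients) shows it has $n$ simple positive roots in $w$, which stay simple after adding the higher-order remainder $x^{2n+2}R$ by the implicit function theorem / Rouché-type argument, provided the scales are separated enough. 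Step 4: translate each simple positive root $w_\ast$ back to $x=\sqrt{w_\ast}>0$, which by the construction of the half-return maps corresponds to a genuine periodic orbit of $Z_\Lambda$ crossing $\Sigma$; distinctness of the $w_\ast$ gives $n$ distinct limit cycles, and all $w_\ast\to0$ as the $\varepsilon_j\to0$, i.e.\ as $\Lambda\to\Lambda_0$, yielding convergence to the origin. Finally, take $W$ to be the preimage under $\mathcal V_n$ of a small box in the prescribed sign-octant.

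The main obstacle I anticipate is Step 3: making the ``rolling'' / sign-alternation argument rigorous, i.e.\ proving that with a suitable hierarchy of scales the truncated polynomial $P(w;\Lambda)=V_2(\Lambda)+V_4(\Lambda)w+\cdots+V_{2n+2}(\Lambda)w^{n}$ genuinely has $n$ simple positive roots \emph{and} that these roots persist once the analytic tail $w^{n+1}\widetilde R$ is reinstated on the relevant shrinking $w$-interval. This requires a careful quantitative choice of the $|\varepsilon_j|$ (each much smaller than a suitable power of the previous, and all much smaller than $|V_{2n+2}(\Lambda_0)|$), together with a uniform bound on $R$ and its derivative on a neighbourhood of $(0,\Lambda_0)$, so that the perturbation $w^{n+1}\widetilde R$ is negligible compared to the gaps between consecutive roots of $P$. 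This is the non-smooth analogue of the classical weak-focus-of-order-$n$ unfolding and the argument is essentially the one in \cite{romanovski2009center}; the only new ingredient needed here is that Theorem~\ref{teo:oddindex} guarantees the displacement function is (to the relevant order) a function of $w=x^2$ alone, which is what makes the degree count come out to exactly $n$ rather than $2n$. The remaining steps are routine: Step 1 is algebra, Step 2 is the inverse function theorem, and Step 4 uses only the already-established correspondence between zeros of $\Delta$ and periodic orbits.
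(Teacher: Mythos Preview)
Your overall strategy---prescribe the even Lyapunov coefficients via the local diffeomorphism $\mathcal V_n$ and then run a ``rolling'' sign-alternation argument---is a legitimate route, but there is a genuine gap in Step~1 and in the justification you give at the end.

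\medskip

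\textbf{The gap.} You write
\[
\Delta(x;\Lambda)=x^2\big(V_2(\Lambda)+V_4(\Lambda)x^2+\cdots+V_{2n+2}(\Lambda)x^{2n}+x^{2n+2}R(x;\Lambda)\big),
\]
and you justify the absence of odd powers by appealing to Theorem~\ref{teo:oddindex}. But Theorem~\ref{teo:oddindex} only says that $V_{2\ell+1}(\Lambda)=0$ \emph{when} $V_2(\Lambda)=\cdots=V_{2\ell}(\Lambda)=0$; it does \emph{not} say the odd coefficients vanish identically in $\Lambda$. Indeed, already from Corollary~\ref{cor} one has $\alpha_3^\pm=-(\alpha_2^\pm)^2$, so $V_3=-\delta\big((\alpha_2^+)^2-(\alpha_2^-)^2\big)=-(\alpha_2^++\alpha_2^-)V_2$, which is generically nonzero once $V_2\neq0$. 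Thus for the $\Lambda$'s you actually use in Step~3 (where the even $V_{2j}$ are small but nonzero), the odd coefficients $V_3,V_5,\dots,V_{2n+1}$ are present, and the reduction to a polynomial in $w=x^2$ is not available. Your final remark that Theorem~\ref{teo:oddindex} ``guarantees the displacement function is (to the relevant order) a function of $w=x^2$ alone'' is therefore incorrect, and with it the clean degree-$n$-in-$w$ count.

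This does not doom the rolling argument, but it means Step~3 needs substantially more work: at each stage of the induction you must argue that the newly nonzero odd term $V_{2k+1}x^{2k-1}$ (which was zero at the previous parameter value and hence is small after a small perturbation) is dominated by the adjacent even terms on the relevant $x$-interval. This can be done, but it is exactly the delicate quantitative part you flagged as the main obstacle, and the shortcut you proposed to handle it is not valid.

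\medskip

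\textbf{Comparison with the paper.} The paper avoids this issue altogether by a different mechanism: it applies the Malgrange Preparation Theorem to $\Gamma(x;\Lambda)=\Delta(x;\Lambda)/x^2$ to write
\[
\Gamma(x;\Lambda)=c(x,\Lambda)\big(x^{2n}+a_{2n-1}(\Lambda)x^{2n-1}+\cdots+a_0(\Lambda)\big),
\]
with $c\neq0$ and $a_j(\Lambda_0)=0$, thereby absorbing all coefficients---odd and even---into the $2n$ functions $a_j$. It then shows $D(a_0,\dots,a_{2n-1})(\Lambda_0)$ has rank $n$ (inherited from $D\mathcal V_n(\Lambda_0)$), fixes $n$ distinct positive nodes $x_1,\dots,x_n$, and solves the $n$ equations $P(\varepsilon x_i,\Lambda)=0$ for $\Lambda$ via the Implicit Function Theorem, using the Vandermonde structure of the linearised system. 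This gives the $n$ limit cycles directly, with no scale-separation estimates and no need to track odd coefficients. Your approach, once repaired, would be more elementary (no Malgrange) but requires the careful hierarchy-of-scales bookkeeping; the paper's approach is cleaner but uses heavier machinery.
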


Theorem \ref{thm:genhopf} is proven in Section \ref{sec:limc}.

\begin{example}
Let $\Lambda=(\la_1,\la_2,\ldots,\la_5)\in\R^5$ and consider the following 5-parameter family of Filippov vector fields:

\begin{equation} \label{ex3}
Z_{\Lambda}(x,y)= \begin{cases}
\Big(1, -x+\la_1 x^2+\la_2 x y+\la_3 y^2 \Big),&  y > 0, \\
\Big(-1, -x+x^2+\la_4 x y +\la_5 y^2\Big),& y < 0.
\end{cases}
\end{equation}

Notice that the origin is a $(2,2)$-monodromic tangential singularity for every $\Lambda\in\R^5.$
From Theorem \ref{teo:coef}, we compute $\mathcal{V}_{5}=(V_2(\Lambda),V_4(\Lambda),\ldots,V_{10}(\Lambda))$ and $V_{12}(\Lambda).$
Thus, for 
\[
\Lambda_0=\Big(1,\dfrac{5(-1+\sqrt{109})}{2},-\dfrac{5(-7+\sqrt{109})}{4},\dfrac{5(1+\sqrt{109})}{2},\dfrac{5(7+\sqrt{109})}{4}\Big),
\] 
we have
\[
\mathcal V_{5}(\Lambda_0)=0,\,\, \det (D\mathcal V_{5}( \Lambda_0))=\dfrac{1520768}{74263959},\, \text{ and } V_{12}(\Lambda_0)=\dfrac{20030\sqrt{109}}{9009}.
\]
Therefore, from Theorem \ref{thm:genhopf}, there exists an open set $W\subset\R^5$ such that the Filippov vector field \eqref{ex3}, $Z_\Lambda,$ has $5$ hyperbolic limit cycles for every $\Lambda\in W.$ In addition, all the limit cycles converge to the origin as $\Lambda$ goes to $\Lambda_0.$

\end{example}

\begin{remark}
In Theorems D and E and in Examples 1, 2, and 3, the obtained hyperbolic limit cycles coexist with the monodromic tangential singularity. In other words, the limit cycles are obtained without destroying the singularity. Therefore, by performing the following small perturbations 
\[
Z_{\la,\e}(x,y)=\begin{cases}
Z^+_{\la}(x-\e,y),&y>0,\\
Z^-_{\la}(x,y),&y<0,
\end{cases} \quad\text{and}\quad Z_{\Lambda,\e}(x,y)=\begin{cases}
Z^+_{\Lambda}(x-\e,y),&y>0,\\
Z^-_{\Lambda}(x,y),&y<0,
\end{cases}
\]
one can see that $Z_{\la,\e}$ and $Z_{\Lambda,\e}$ undergo a pseudo-Hopf bifurcation at $\e=0$, which creates a sliding segment and an additional hyperbolic limit cycle (see Figure \ref{fig:perturbation}), increasing by $1$ the number of limit cycles in Theorems D and E and in Examples 1, 2, and 3.  The pseudo-Hopf bifurcation was reported by Filippov in his book \cite{Filippov88} (see item b of page 241) and has been further investigated in \cite{castillo17}. This bifurcation is a useful tool to increase the number of limit cycles when dealing with the cyclicity problem (see, for instance, \cite{cruz19,GouTor20}).
\end{remark}

\begin{figure}[H]
	\centering
	\begin{overpic}[scale=0.6]{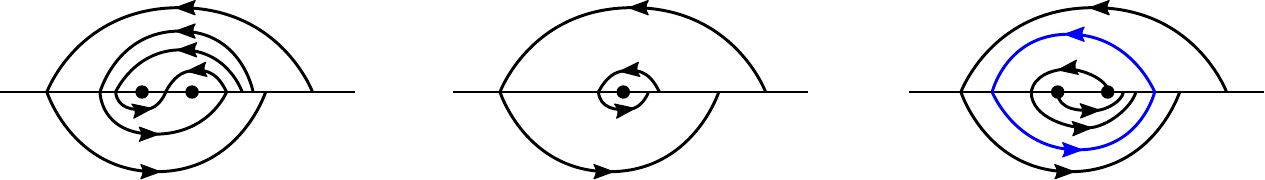}
			\put(10,-3){$\varepsilon<0$}
			\put(47,-3){$\varepsilon=0$}
			\put(84,-3){$\varepsilon>0$}
	\end{overpic}
	\vspace{0.5cm}
	\caption{Pseudo-Hopf bifurcation, which creates a sliding segment and a hyperbolic limit cycle.}
	\label{fig:perturbation}
\end{figure}

\subsection{Structure of the paper}
First, in Section \ref{sec:cf}, we recall a canonical expression, introduced in \cite{novaes2020smoothing}, for Filippov vector fields \eqref{sistemainicial} around $(2k^{+}, 2k^{-})$-monodromic tangential singularities. Such a canonical form will be of major importance for proving our main results. Then, in Section \ref{sec:gpc}, we introduce the concept of generalized polar coordinates (see \cite{broer1991structures}), which allow us to obtain, in Section \ref{proof:ta}, the proof of Theorem \ref{teo:main} about the regularity of the half-return maps defined around $(2k^{+}, 2k^{-})$-monodromic tangential singularities. In Section \ref{sec:invo}, we discuss a general property for pair of involutions from which we straightforwardly conclude that the index of the first non-vanishing Lyapunov coefficient of a $(2k^+,2k^-)$-monodromic tangential singularity is always even. This leads to the proof of Theorem \ref{teo:oddindex} in Section \ref{proof:tb}. Section \ref{sec:LC} is devoted to obtaining the recursive formula for the Lyapunov coefficients stated in Theorem \ref{teo:coef}, which is then proven in Section \ref{proof:coef}. In Section \ref{sec:limc}, we discuss the appearance of small amplitude limit cycles around  $(2k^{+}, 2k^{-})$-monodromic tangential singularities.  Theorems \ref{thm:hopf} and \ref{thm:genhopf} are proven in Section \ref{sec:limc}. Finally, two Appendixes are provided: Appendix A contains the implemented algorithms, based on Theorem \ref{teo:coef}, for computing the Lyapunov coefficients, and Appendix B is devoted to the proof of Corollary \ref{cor}.

\section{Canonical Form}\label{sec:cf}

In this section, we provide a simpler expression for Filippov vector fields around a $(2k^{+}, 2k^{-})$-monodromic tangential singularity. This canonical expression has been introduced in \cite{AndGomNov19} and will be important for proving our main results. In what follows, for the sake of completeness, we briefly explain how to obtain it.

Assuming that the Filippov vector field \eqref{sistemainicial} has a $(2k^{+}, 2k^{-})$-monodromic tangential singularity at the origin (see conditions {\bf C1}, {\bf C2}, and {\bf C3}), we have that $X^{\pm}(0,0)\neq0.$
Therefore, there exists a small neighborhood $U$ of the origin such that $X^{\pm}(x,y) \neq0$ for all $(x,y) \in U.$  Taking into account that $|X^{\pm}(x,y)| = \pm \delta X^{\pm}(x,y)$ for every $(x,y)\in U,$ a time rescaling can be performed in order to transform the Filippov vector field \eqref{sistemainicial} restricted to $U$ into \begin{equation*} 
(\dot{x},\dot{y}) = \tilde{Z}(x,y)=\begin{cases}
(\delta, \eta^+(x,y)), &y > 0, \\
(-\delta, \eta^-(x,y)), &y < 0,
\end{cases}
\end{equation*}
 where 
\begin{equation*} \label{eq:eta}
\eta^+(x,y) = \delta\dfrac{Y^+(x,y)}{X^+(x,y)}\,\text{ and }\, \eta^-(x,y) = -\delta\dfrac{Y^-(x,y)}{X^-(x,y)}. 
\end{equation*}

In addition, we can show that 
\begin{equation}\label{info1}
{(\tilde{Z}^{\pm})}^ih(0,0) = 0 \,\text{ if, and only if, }\, (Z^{\pm})^ih(0,0) = 0,\,\text{ for all }\, i = 1, 2, \dots, 2k^{\pm},
\end{equation}
and
\begin{equation}\label{info2}
\tilde{Z}^{\pm}h(x,0)  = \eta^{\pm}(x,0)\, \text{ and }(\tilde{Z}^{\pm})^ih(x,0) = \dfrac{\partial^{i-1}}{\partial x^{i-1}}\eta^{\pm}(0,0), \,\text{ for all }\, i = 1, \dots, 2k^{\pm}.
\end{equation}
Since $(Z^{\pm})^ih(0,0) = 0$ for $i = 1, 2, \dots, 2k^{\pm}-1$ and $(Z^{\pm})^{2k^{\pm}}h(0,0) \neq 0,$ by combining \eqref{info1} and \eqref{info2}, we can expand $\eta^{\pm}(x,0)$ around $x=0$ as follows:
\begin{equation*} 
\eta^{\pm}(x,0)= \sum_{i=0}^{2k^{\pm}-1}\dfrac{1}{i!}\dfrac{\partial^i\eta^{\pm}}{\partial x^i}(0,0)x^i+x^{2k\pm}f^{\pm}(x)= a^{\pm}x^{2k^{\pm}-1} + x^{2k\pm}f^{\pm}(x).
\end{equation*} 
 Consequently, the function $\eta^{\pm}(x,y)$ writes
\begin{equation*}
\eta^{\pm}(x,y) = a^{\pm}x^{2k^{\pm}-1} + x^{2k^{\pm}}f^{\pm}(x) + yg^{\pm}(x,y).
\end{equation*} 
Notice that 
\[
\begin{aligned}
a^{\pm}=&\dfrac{1}{(2k^{\pm}-1)!}\dfrac{\partial^{2k^{\pm}-1}\eta^{\pm}}{\partial x^{2k^{\pm}-1}}(0,0)
=\dfrac{\pm\delta}{(2k^{\pm}-1)!}\dfrac{\partial^{2k^{\pm}-1}}{\partial x^{2k^{\pm}-1}}\left(\dfrac{Y^{\pm}(x,0)}{ X^{\pm}(x,0)}\right)\Bigg |_{x=0}\\
=&\dfrac{1}{(2k^{\pm}-1)!|X^{\pm}(0,0)|}\dfrac{\partial^{2k^{\pm}-1}Y^{\pm}}{\partial x^{2k^{\pm}-1}}(0,0),
\end{aligned}
\]
which coincides with the value defined in \eqref{value:a}, and the functions $f^{\pm}(x)$ and $g^{\pm}(x,y)$ coincide with the ones defined in  \eqref{auxfunc}.

Accordingly, the Filippov vector field \eqref{sistemainicial} on $U$ is equivalent to 
\begin{equation} \label{sistemacanonico}
(\dot{x},\dot{y}) = \begin{cases}
(\delta, a^+x^{2k^+-1} + x^{2k^+}f^+(x) + yg^+(x,y)), &  y > 0, \\
(-\delta,a^-x^{2k^--1} + x^{2k^-}f^-(x) + yg^-(x,y)), &y < 0.
\end{cases}
\end{equation}

\section{Regularity of the half-return map}\label{sec:gpc}

This section is devoted to the proof of Theorem \ref{teo:main}. The main tool we shall employ to obtain the analyticity of the half-return maps $\f^{\pm}$ is the {\it generalized polar coordinate transformation} (see \cite{broer1991structures}). This coordinate transformation was introduced by Lyapunov in \cite{liapunov66} and was originally conceived for studying degenerate singularities of vector fields.  Afterwards, this tool has shown to be very useful in the study of  degenerate singularities of smooth planar vector fields (see, for instance, \cite{brunella90,CGP97,dumortier90}).

In \cite{gassulcoll}, Coll et al. employed the generalized polar coordinate transformation to study several types of monodromic singularities of discontinuous piecewise analytic systems, one of them, the fold-fold type, coincides  with our $(2,2)$-monodromic tangential singularity, according to Definition \ref{def:kpknmono}. For this case, they showed that, in the transformed space $\mathbb{S}^1\times \R^+,$ the monodromic singularity blows-up into $\{0\}\times\mathbb{S}^1,$ which does not have singularities in the closure of the semi-plane of interest (see Figure \ref{fig:analiticity}). As we shall see, this implies the analyticity of half-return maps around $(2,2)$-monodromic tangential singularities.

\begin{figure}[H]
	\centering
	\begin{overpic}[scale=0.6]{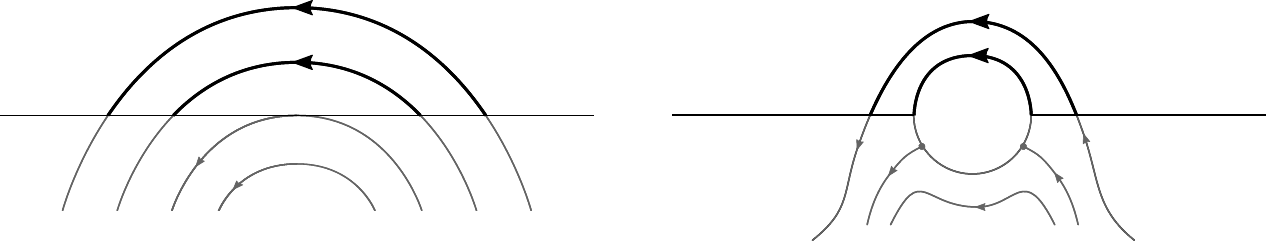}
		\put(48,9){$\Sigma$}
		\put(100,9){$\Sigma$}
		\put(23,7){$0$}
		\put(76,9){$0$}		
		\put(0,11){$\varphi^+(x_0)$}
		\put(60,11){$\varphi^+(x_0)$}
		\put(39,11){$x_0$}
		\put(86,11){$x_0$}
	\end{overpic}
	\caption{Blow-up of $Z^+$ at the monodromic tangential singularity. In the transformed space $\mathbb{S}^1\times \R^+,$ the monodromic singularity blows-up into $\{0\}\times\mathbb{S}^1,$ which does not have singularities in the closure of the semi-plane of interest.}
	\label{fig:analiticity}
\end{figure}

The same result for $(2,2)$-monodromic tangential singularities can be obtained by using an analytic version of Vishik's normal form (see \cite{castro}), however, this does not work for more degenerated tangential singularities. Here, we shall follow the ideas from  \cite{gassulcoll} to obtain the analyticity of half-return maps around $(2k^+,2k^-)$-monodromic tangential singularities.

First, we recall the definition of generalized polar coordinates. For positive real numbers $p$ and $q,$ the $(R, \theta, p, q)$-generalized polar coordinates are given by $\big(x,y\big)=\big(R^p\text{Cs}(\theta),R^q\text{Sn}(\theta)\big),$ where $R>0,$ $\theta\in\mathbb{S}^1,$ and the function $\big(\text{Sn}(\theta),\text{Cs}(\theta)\big)$ is the solution of the following Cauchy problem:
\begin{equation*}
\begin{cases}
\dot{\text{Cs}}=-{\text{Sn}}^{2p-1}, \\
\dot{\text{Sn}}={\text{Cs}}^{2q-1},\\
\end{cases}
\text{Cs}(0)=\sqrt[2q]{\dfrac{1}{p}},\quad \text{Sn}(0)=0.
\end{equation*}
In \cite{CGP97}, it is proven that the functions $\text{Cs}(\theta)$ and $\text{Sn}(\theta)$ are analytic, $T$-periodic with
\[
T=2p^{\frac{-1}{2q}}q^{\frac{-1}{2p}}\int_0^1(1-s)^{\frac{1-2p}{2p}}s^{\frac{1-2q}{2q}}ds>0,
\]
and satisfy the following properties:
\begin{itemize}
\item $p (\text{Cs}(\theta))^{2q}+q(\text{Sn}(\theta))^{2p}=1;$

\smallskip

\item $\text{Cs}$ is an even function and $\text{Sn}$ is an odd function;

\smallskip

\item $\text{Cs}(\frac{T}{2}-\theta)=-\text{Cs}(\theta)$ and  $\text{Sn}(\frac{T}{2}-\theta)=\text{Sn}(\theta).$
\end{itemize}
Taking the above properties into account, one can see that 
\begin{equation}\label{cssn}
\begin{array}{l}
\text{Cs}(-\frac{T}{4})=\text{Cs}(\frac{T}{4})=0,\vspace{0.2cm}\\ \text{Cs}(\theta)>0 \text{ for } \theta\in(-\frac{T}{4},\frac{T}{4}), \text{ and  }\text{Cs}(\theta)<0 \text{ for }\theta\in[-\frac{T}{2},-\frac{T}{4})\cup (\frac{T}{4},\frac{T}{2}],\vspace{0.3cm}\\
\text{Sn}(\frac{T}{2})=\text{Sn}(0)=\text{Sn}(\frac{T}{2})=0,\vspace{0.2cm}\\ \text{Sn}(\theta)>0  \text{ for } \theta\in(0,\frac{T}{2}) , \text{ and  }\text{Sn}(\theta)<0 \text{ for } \theta\in(-\frac{T}{2},0).
\end{array}
\end{equation}

\subsection{Proof of Theorem \ref{teo:main}}\label{proof:ta} We shall prove the analyticity of the $\varphi^+.$ The analyticity of $\varphi^-$ will follows analogously.
	
Using the $(R, \theta, p, q)$-generalized polar change of coordinates for $p=1$ and $q=2k^{+}$ and rescaling the time by taking $\tau =\dfrac{t}{R},$ the vector field \eqref{sistemacanonico} restricted to $y\geq0$ is transformed  into 
\begin{equation}\label{sistemapolar}
(\theta', R') = (F^{+}(R,\theta), G^{+}(R, \theta)), \quad \theta \in [0,\frac{T}{2}] \,\text{ and }\, R>0,
\end{equation}
where
\[
\begin{aligned}
F^{+}(R, \theta)=&  a^{+}  \text{Cs}(\theta )^{2 k^{+} }-2 \delta  k^{+}  \text{Sn}(\theta )\\
&+R \text{Cs}(\theta ) \left(\text{Sn}(\theta ) g^{+}\left(R \text{Cs}(\theta ),R^{2 k^{+} } \text{Sn}(\theta )\right)+\text{Cs}(\theta )^{2 k^{+} } f^{+}(R \text{Cs}(\theta ))\right),\\
G^{+}(R,\theta)=& R \text{Cs}(\theta )^{2 k^{+} -1} \left(\delta  \text{Cs}(\theta )^{2 k^{+} }+a  \text{Sn}(\theta )\right)\\
&+R^2 \text{Sn}(\theta ) \left(\text{Sn}(\theta ) g^{+}\left(R \text{Cs}(\theta ),R^{2 k^{+} } \text{Sn}(\theta )\right)+\text{Cs}(\theta )^{2 k^{+} } f^{+}(R \text{Cs}(\theta ))\right).
\end{aligned}
\]
Notice that, for $R=0,$ $F^{+}(0, \theta)= a^{+}\text{Cs}^{2k^{+}}(\theta) - 2\delta k^+\text{Sn}(\theta).$  Thus, since $\delta a^+ <0,$ and taking \eqref{cssn} into account, we conclude that any root of $F^+(0, \theta)=0$ must satisfy $-\frac{T}{2}<\theta<0.$ Consequently, for $R>0$ sufficiently small, $\theta'>0$ for every $\theta\in[0,\frac{T}{2}].$ This means that $\theta$ can be taken as the independent variable in \eqref{sistemapolar}. Indeed, denoting
\[
H^{+}(R, \theta) = \dfrac{G^{+}(R, \theta)}{F^{+}(R, \theta)}
\]
the differential equation \eqref{sistemapolar} writes
\begin{equation}\label{sistemapolar2}
\dfrac{dR}{d\theta} = H^+(R,\theta),\quad \theta \in [0, \frac{T}{2}]\,\text{ and }\, R>0.
\end{equation}
Since $H^+(R,\theta)$ is analytic in a neighborhood of $\{0\}\times[0,\frac{T}{2}],$ the differential \eqref{sistemapolar2} can be analytically extended to $R=0.$ Accordingly, let $r^{+}(\theta,x_0)$ denote the solution of such an extension satisfying $r^{+}(0,x_0)=x_0.$ From the comments above, we get that $r^{+}(\theta,x_0)$ is analytic in a neighborhood of $[0,\frac{T}{2}]\times\{0\}.$  Finally, notice that $\varphi^{+}(x_0) = r^{+}(\frac{T}{2}, x_0)Cs(\frac{T}{2}).$ Therefore, we conclude that $\varphi^{+}(x_0)$ is analytic in a neighborhood of $x_0=0,$ which concludes the proof of Theorem \ref{teo:main} for the analytic case. The  $C^r$ case is analogous.

\section{A General Property of Involutions}\label{sec:invo}

This section is devoted to prove a general property for pair of involutions. First, we recall the useful  {\it Fa\'{a} di Bruno's Formula} for higher derivatives of a composite function (see \cite{J})
\begin{equation}\label{fadibruno}
\dfrac{d^l}{d\alpha^l}g(h(\alpha))=\displaystyle\sum_{m=1}^l g^{(m)}(h(\alpha)) B_{l,m}\big(h'(\alpha),h''(\alpha),\ldots,h^{(l-m+1)}(\alpha)\big),
\end{equation}
where $B_{l,m}$ denotes the partial Bell polynomials as defined in \eqref{bell}. 

Theorem \ref{teo:oddindex} is a direct consequence of the following property of involutions:

\begin{proposition} \label{prop1}
Let $\varphi,\psi:I\rightarrow \R$ be $C^{2\ell+1}$ involutions around $0.$ If $\varphi(0)=\psi(0)$ and $\varphi^{(i)}(0)=\psi^{(i)}(0)$ for $i=1,2\ldots,2\ell,$ then $\varphi^{(2\ell+1)}(0)=\psi^{(2\ell+1)}(0).$
\end{proposition}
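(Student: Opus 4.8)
The plan is to exploit the relation that an involution satisfies, namely $\varphi(\varphi(\alpha))=\alpha$, differentiated successively at $\alpha=0$, together with the normalization that follows from $\varphi(0)=\psi(0)=:c$. First I would observe that, since $\varphi$ is an involution fixing... actually $\varphi(0)=c$ need not be $0$; but $\varphi$ maps a neighborhood of $0$ to a neighborhood of $c$ and $\varphi$ restricted appropriately is its own inverse, so in particular $\varphi(c)=0$ as well, hence $c=\varphi(\varphi(0))=\varphi(c)$ gives nothing forcing $c=0$. In the application $\varphi^\pm$ fix $0$, so one may as well first reduce to the case $\varphi(0)=\psi(0)=0$ by a translation in the target (replacing $\varphi$ by $\varphi-c$ does not preserve the involution property, so instead I would keep $c$ general and simply carry it along, or note that the hypotheses already include $\varphi(0)=\psi(0)$ so the zeroth-order terms agree and only the higher Taylor coefficients are at stake).

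The key computation is the following. Write $\varphi(\alpha)=c+\sum_{n\ge 1}a_n\alpha^n$ and $\psi(\alpha)=c+\sum_{n\ge 1}b_n\alpha^n$ with $a_i=b_i$ for $i=1,\dots,2\ell$ by hypothesis; the goal is $a_{2\ell+1}=b_{2\ell+1}$. Applying Fa\`a di Bruno's formula \eqref{fadibruno} to $\varphi\circ\varphi$ and using $\varphi(\varphi(\alpha))\equiv\alpha$, I would extract the coefficient of $\alpha^{2\ell+1}$ in the identity $\varphi(\varphi(\alpha))=\alpha$. Concretely, for $l=2\ell+1$ one gets
\[
0=\sum_{m=1}^{2\ell+1}\varphi^{(m)}(c)\,B_{2\ell+1,m}\big(\varphi'(0),\dots,\varphi^{(2\ell+2-m)}(0)\big),
\]
and the only term in which $\varphi^{(2\ell+1)}(0)$ appears \emph{linearly and undifferentiated} in the top slot is $m=1$ (giving $\varphi'(c)\,\varphi^{(2\ell+1)}(0)$) and $m=2\ell+1$ (giving $\varphi^{(2\ell+1)}(c)\,(\varphi'(0))^{2\ell+1}$). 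Since $\varphi$ is an involution, $\varphi'(c)\varphi'(0)=1$, and in our setting $\varphi'(0)=-1$ so $\varphi'(c)=-1$ as well — but more importantly, every \emph{other} Bell-polynomial term $B_{2\ell+1,m}$ with $2\le m\le 2\ell$ involves only derivatives $\varphi^{(j)}(0)$ with $j\le 2\ell+2-m\le 2\ell$, hence depends only on the common low-order data $a_1,\dots,a_{2\ell}$, and similarly $\varphi^{(m)}(c)$ for $2\le m\le 2\ell+1$ can be re-expressed, via differentiating the involution relation fewer times, in terms of $a_1,\dots,a_{2\ell}$ only. Thus the coefficient extraction yields a relation of the form $\varphi'(c)\,a_{2\ell+1}+\varphi^{(2\ell+1)}(c)\,(a_1)^{2\ell+1}=(\text{function of }a_1,\dots,a_{2\ell}\text{ only})$, and I would further show that $\varphi^{(2\ell+1)}(c)$ itself is determined by $a_1,\dots,a_{2\ell+1}$ in a way symmetric enough that, after substituting and using $\varphi'(0)=\varphi'(c)=-1$, the identity collapses to an explicit formula $a_{2\ell+1}=\Phi(a_1,\dots,a_{2\ell})$ for a universal function $\Phi$. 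The same formula with $b$'s gives $b_{2\ell+1}=\Phi(b_1,\dots,b_{2\ell})=\Phi(a_1,\dots,a_{2\ell})=a_{2\ell+1}$, which is the claim.

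The cleanest way to organize this, and the one I would actually write, is to avoid tracking $\varphi^{(2\ell+1)}(c)$ separately: differentiate $\varphi(\varphi(\alpha))=\alpha$ exactly $2\ell+1$ times and evaluate at $\alpha=0$, then also use that $\varphi^{-1}=\varphi$ so that one may instead parametrize by the Taylor coefficients of $\varphi$ near $0$ and near $c$ as two \emph{dependent} lists linked by the inversion relations. The upshot is that the full jet of $\varphi$ at $0$ up to order $2\ell+1$ is \emph{not free}: the involution constraint, read order by order, expresses $a_{2\ell+1}$ (in fact every odd-order coefficient) as a polynomial in the strictly lower-order coefficients, precisely because at odd order $2\ell+1$ the "new" unknowns $a_{2\ell+1}$ from $\varphi$ near $0$ and $\tilde a_{2\ell+1}$ from $\varphi$ near $c$ enter the two available relations (the $\alpha^{2\ell+1}$-coefficients of $\varphi\circ\varphi$ and of the symmetric composition) with an invertible linear part. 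I expect the main obstacle to be bookkeeping: showing rigorously that no term of order $2\ell+1$ other than the designated linear ones survives, i.e. that the "error" term $\Phi$ genuinely depends only on $a_1,\dots,a_{2\ell}$. This is a parity/grading argument about Bell polynomials — $B_{2\ell+1,m}(x_1,\dots,x_{2\ell+2-m})$ for $m\ge 2$ never sees $x_{2\ell+1}$ — and I would isolate it as a short lemma before doing the coefficient extraction, so that the remaining steps are the routine (if tedious) substitution described above.
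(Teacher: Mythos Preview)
Your core approach matches the paper's: differentiate $\varphi\circ\varphi=\mathrm{id}$ to order $2\ell+1$ via Fa\`a di Bruno, isolate the terms carrying $\varphi^{(2\ell+1)}(0)$, and conclude that this top derivative is a universal polynomial in the lower ones. The divergence is that you spend most of the proposal wrestling with the possibility $c:=\varphi(0)\neq 0$, which forces you to track jets at two basepoints and to argue separately that $\varphi^{(2\ell+1)}(c)$ is itself controlled by lower data. This is unnecessary overhead: in the paper's setting ``involution around $0$'' means $\varphi(0)=0$ (the half-return maps satisfy $\varphi^\pm(0)=0$), and the paper's proof uses this without comment. With $c=0$ your two ``top'' terms at $m=1$ and $m=2\ell+1$ both carry $\varphi^{(2\ell+1)}(0)$ itself, with combined coefficient
\[
\varphi'(0)+\big(\varphi'(0)\big)^{2\ell+1}=(-1)+(-1)^{2\ell+1}=-2,
\]
and the argument collapses immediately to
\[
-2\,\varphi^{(2\ell+1)}(0)=S\big(\varphi'(0),\ldots,\varphi^{(2\ell)}(0)\big)
\]
for a universal polynomial $S$ built from the remaining Bell-polynomial terms, and identically for $\psi$. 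That is the paper's entire proof; your detour through derivatives at $c$ and ``two dependent lists'' is not needed.
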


\begin{proof}
Since $\varphi\circ\varphi (x)=x$ and $\psi\circ\psi (x)=x,$ then $\varphi'(0)=-1$ and  $\psi'(0)=-1.$

Now, applying Fa\'{a} di Bruno's Formula \eqref{fadibruno} for computing the $n$-th derivative of the composition $\varphi \circ \varphi(x)=x,$ we get
	\begin{equation}\label{seriesderi}
	\sum^n_{i=1}\varphi^{(i)}(0)\text{B}_{n,i}(\varphi'(0), \varphi''(0), \dots, \varphi^{(n-i+1)}(0))=0,\,\, n\geq 2.
	\end{equation}
	
	Denote $$S({\varphi}'(0), \dots, {\varphi}^{(n-1)}(0)):=-\sum^{n-1}_{i=2}\text{B}_{n,i}({\varphi}'(0), \dots, {\varphi}^{(n-i+1)}(0)).$$
	Thus, from \eqref{seriesderi}, we have 
\[
	 \varphi'(0)\text{B}_{n,1}(\varphi'(0), \dots, \varphi^{(n)}(0)) + \varphi^{(n)}(0)\text{B}_{n,n}(-1)=\displaystyle -\sum^{n-1}_{i=2}\text{B}_{n,i}(\varphi'(0), \dots, \varphi^{(n-i+1)}(0)), \]
	 which implies that
\begin{equation}\label{formulaeqteoC}
	((-1)^n-1)\varphi^{(n)}(0)=S({\varphi}'(0), \dots, {\varphi}^{(n-1)}(0)).
\end{equation}
	Analogously, we obtain that
	\begin{equation} \label{formulaeqteoC2}
	((-1)^n-1)\psi^{(n)}(0) = S({\psi}'(0), \dots, {\psi}^{(n-1)}(0)). 
	\end{equation}
	
	Now, assume that ${\varphi}^{(i)}(0) = {\psi}^{(i)}(0) = \alpha_i,$ for $i = 1, 2, \dots, 2 \ell.$
	From \eqref{formulaeqteoC} and \eqref{formulaeqteoC2}, taking $n = 2\ell+1,$ we get that 
\[
 -2{\varphi}^{(2\ell +1)}(0)= S({\varphi}'(0), \dots, {\varphi}^{(2\ell)}(0)) = S({\alpha_1}, \dots, {\alpha_{2\ell}}) = -2{\psi}^{(2\ell +1)}(0),
\]
which concludes the proof.
\end{proof}

\subsection{Proof of Theorem \ref{teo:oddindex}}\label{proof:tb}
	The proof of Theorem \ref{teo:oddindex} follows directly from Proposition \ref{prop1} by taking $\varphi=\varphi^+$ and $\psi=\varphi^-.$

\section{Lyapunov coefficients}\label{sec:LC}
Consider a Filippov vector field given in the canonical form \eqref{sistemacanonico}. The main idea for determining the coefficients  $\alpha^{+}_n$'s (resp. $\alpha^{-}_n$'s) of the series \eqref{eq:series} of the half-return map $\varphi^{+}$ (resp. $\varphi^{-}$) consists of building a function $\mu^{+}$ (resp. $\mu^{-}$) that maps a point $(x_0,0) \in \Sigma$ onto its image under the flow of $Z^{+}$ (resp. $Z^{-}$) to a transversal section $\Sigma^{\perp}_+= \{(x,y) \in U: x=0,\, y>0 \}$ (resp. $\Sigma_-^{\perp}= \{(x,y) \in U: x=0,\, y<0 \}$) as shown in Fig. \ref{mapadobradobra}. From the transversality of $\Sigma^{\perp}_{\pm}$, the  functions $\mu^{\pm}$ can always be obtained via Implicit Function Theorem. Nevertheless, for the canonical form \eqref{sistemacanonico} the time spent by a trajectory traveling from $(x_0,0) \in \Sigma$ onto $\Sigma^{\perp}_{+}$ (resp. $\Sigma^{\perp}_{-}$) is known, namely, $-\delta x_0$ (resp. $\delta x_0)$, thus the functions $\mu^{\pm}$ can be easily written in terms of the flow. 

Using those functions, we have that the coefficients  $\alpha^{+}_n$'s and $\alpha^{-}_n$'s can be computed implicitly by comparing the series of both sides of the identities
\begin{equation}\label{foldmu}
\mu^{+}(\varphi^{+}(x_0)) = \mu^{+}(x_0)\,\, \text{ and }\, \mu^{-}(\varphi^{-}(x_0)) = \mu^{-}(x_0),
\end{equation}
respectively. This avoids the necessity of blowing-up the singularity and working with generalized polar coordinates to obtain the Lyapunov coefficients.

This alternative for analyzing the half-return maps $\f^{\pm}$ implicitly was introduced by Teixeira in \cite{teixeira77} for studying vector fields defined near the boundary of a manifold and, later, applied in the study of discontinuous planar vector fields \cite{teixeira81}.  In the theory of singularities, the maps $\mu^{\pm}$ satisfying \eqref{foldmu} are known as {\it folds} associated with the involutions $\varphi^{\pm}$. It is worth mentioning that for a given fold, there exists a unique involution for which the fold is associated to (see, for instance, \cite[Proposition 2.6]{mancini05}).

\begin{figure}[H]
	\centering
	\begin{overpic}[scale=0.5]{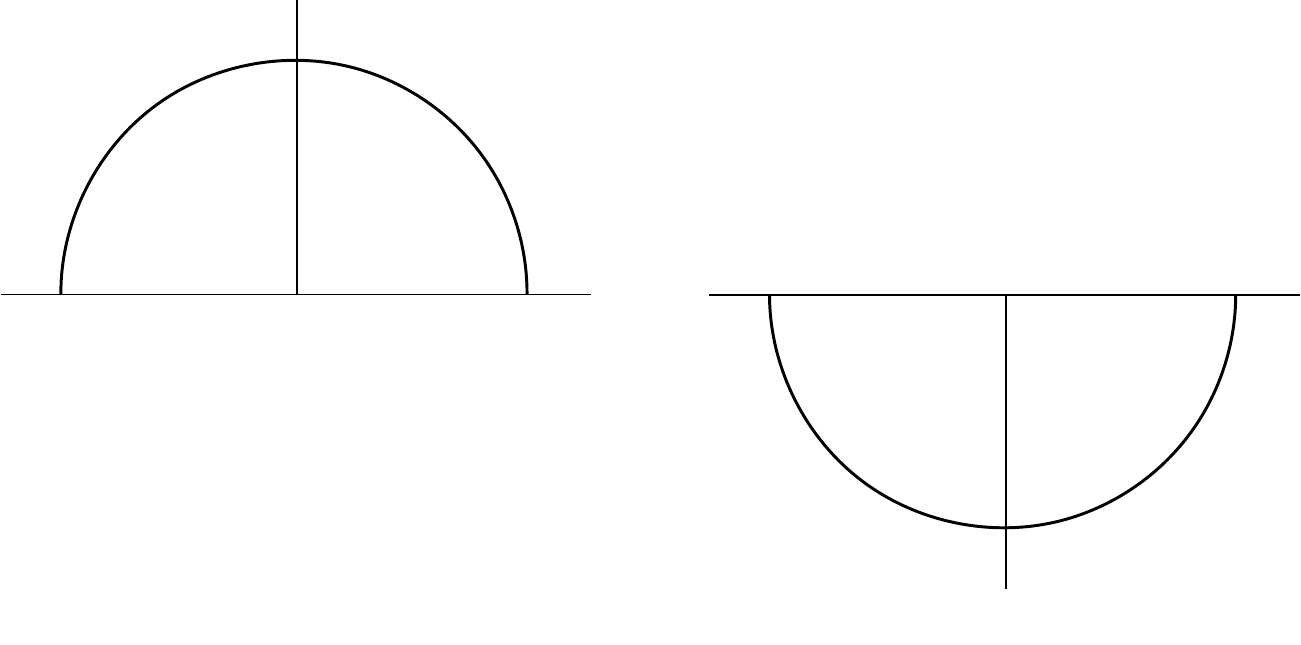}
	\put(5,14){$\Rightarrow \mu^-(\varphi^-(x_0)) = \mu^-(x_0)$}
	\put(60,40){$\Rightarrow \mu^+(\varphi^+(x_0)) = \mu^+(x_0)$}
	\put(46,26){$\Sigma$}
	\put(101,26){$\Sigma$}
	\put(23,50){$\Sigma^{\perp}_+$}
	\put(78,2){$\Sigma^{\perp}_-$}
	\put(12,46){$\mu^+(x_0)$}
	\put(66,6){$\mu^-(x_0)$}
	\put(4,24){$x_0$}
	\put(58,29){$x_0$}
	\put(36,24){$\varphi^+(x_0)$}
	\put(91,29){$\varphi^-(x_0)$}
	\end{overpic}
	\caption{Illustrations of the functions $\mu^{+}$ and $\mu^{-}.$}
	\label{mapadobradobra}
\end{figure}

In what follows, we shall deal with the problem of composition of series \eqref{foldmu}. The reader is referred to \cite{Cima2020} where, although in a different context, similar problems were treated.

\subsection{Preliminary Results}
Denote by $\phi^{\pm}(t,x_0) = (x^{\pm}(t,x_0), y^{\pm}(t,x_0))$ the solutions of 
\[
(\dot{x},\dot{y}) = (\pm \delta, a^{\pm}x^{2k^{\pm}-1} + x^{2k^{\pm}}f^{\pm}(x) + yg^{\pm}(x,y))
\] with initial condition $\phi^{\pm}(0,x_0) = (x^{\pm}(0,x_0), y^{\pm}(0,x_0))=(x_0,0)\in \Sigma.$ Notice that $x^{\pm}(t,x_0)=x_0 \pm \delta t,$ so that $x^{\pm}(t,x_0) = 0$ if, and only if,  $t= \mp \delta x_0.$ Accordingly, we define  
\begin{equation}\label{muproof}
\mu^{\pm}(x_0)=y^{\pm}(\mp \delta x_0,x_0).
\end{equation} 
As commented before, we compute the series of $\mu^{\pm}(x_0)$ around the origin:
\begin{equation*}\label{expansaomapa}
\mu^{\pm}(x_0)= \sum_{n=1}^{\infty}\mu_n^{\pm}x_0^n,
\end{equation*}
 where 
\begin{equation*}
\mu_n^{\pm} = \dfrac{{{\mu^{\pm}}^{(n)}}(0)}{n!}=\sum_{i+j=n}( \mp \delta)^i{n\choose i} \dfrac{\partial^ny^{\pm}(0,0)}{\partial t^i \partial x_0^j} 
=  \dfrac{1}{n!} \sum_{i=1}^{n}(\mp \delta)^i{n \choose i}\dfrac{\partial^{n-i}}{\partial x_0^{n-i}}\left(\dfrac{\partial^i y^{\pm}}{\partial t^i}(0,0) \right) .	
\end{equation*}
In the last equality above, we are using that $y^{\pm}(0,x_0)=0$ and, consequently, $\dfrac{\partial^n y^{\pm}}{\partial x_0^n}(0,0) = 0.$ Now, denoting $y_i^{\pm}(x)=\dfrac{\partial^i y^{\pm}}{\partial t^i}(0,x),$ we get 
\[
\mu_i^{\pm}= \dfrac{1}{i!}\sum_{j=1}^{i} (\mp \delta)^j {{i}\choose{j}} (y_j^\pm)^{(i-j)}(0).
\]
Notice that this last expression coincides with the one presented in \eqref{mui}.

From here, in order to prove Theorem \ref{teo:coef},  we need to establish two preliminary technical lemmas. The first lemma states that \eqref{yi} provides a recursive formula for  $y_i^{\pm}(x),$ and the second lemma establishes that the coefficients $\mu_i^{\pm}$ vanishe for $i\leq2k^{\pm}-1.$  
In their proofs and also in the proof of Theorem \ref{teo:coef}, we shall use some additional identities. Namely:
the well-known {\it General Leibniz Rule} for higher derivatives of product of functions
\begin{equation}\label{prodrule}
\dfrac{d^l}{d\alpha^l}\big(g(\alpha)h(\alpha)\big)= \sum_{k=0}^{l}{l \choose k} g^{(l-k)}(\alpha) h^{(k)}(\alpha);
\end{equation}
and the following {\it Multinomial Formula}  in terms of ordinary Bell polynomials (see \cite{comtet})
\begin{equation}\label{multi}
\Bigg(\sum_{j=1}^{\infty} \alpha_jx^j\Bigg)^n= \sum_{i=n}^{\infty} \hat{B}_{i,n}(\alpha_1, \dots, \alpha_{i-n+1})x^i,
\end{equation}
where $\hat B_{l,m}$ denotes the ordinary Bell polynomials as defined in \eqref{bell}.

\begin{lemma}
\label{lemma:fundamental}
The functions $y_i^{\pm}(x),$ for $i=1,2,\ldots,$ are defined recursively by \eqref{yi}.
\end{lemma}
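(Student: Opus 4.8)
\textbf{Proof plan for Lemma \ref{lemma:fundamental}.}

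The plan is to derive the recursion \eqref{yi} by repeatedly differentiating the differential equation for $y^{\pm}(t,x_0)$ with respect to $t$ and then evaluating along the line $x^{\pm}(t,x_0)=x_0\pm\delta t$. Recall that $y_i^{\pm}(x)=\partial^i_t y^{\pm}(0,x)$, but it is cleaner to work with the function $\widetilde y_i^{\pm}(t,x_0)\defeq \partial^i_t y^{\pm}(t,x_0)$ and set $y_i^{\pm}(x)=\widetilde y_i^{\pm}(0,x)$ at the end. Since $x^{\pm}(t,x_0)=x_0\pm\delta t$, we have $\partial_t x^{\pm}=\pm\delta$, and the equation for the second coordinate reads
\[
\partial_t y^{\pm}(t,x_0) = a^{\pm}\big(x^{\pm}\big)^{2k^{\pm}-1} + \big(x^{\pm}\big)^{2k^{\pm}}f^{\pm}(x^{\pm}) + y^{\pm}\,g^{\pm}(x^{\pm},y^{\pm}).
\]
First I would differentiate the pure ``polynomial'' part $P^{\pm}(x)\defeq a^{\pm}x^{2k^{\pm}-1}+x^{2k^{\pm}}f^{\pm}(x)$ exactly $i-1$ times in $t$: because $\partial_t$ acting on a function of $x^{\pm}$ alone produces a factor $\pm\delta$ times the $x$-derivative, one gets $(\pm\delta)^{i-1}(P^{\pm})^{(i-1)}(x^{\pm})$; expanding $(P^{\pm})^{(i-1)}$ by the General Leibniz Rule \eqref{prodrule} applied to $x^{2k^{\pm}}\cdot f^{\pm}(x)$ and using the elementary derivative $\frac{d^{m}}{dx^{m}}x^{2k^{\pm}-1}=\frac{(2k^{\pm}-1)!}{(2k^{\pm}-m)!}x^{2k^{\pm}-m}$ yields precisely the first parenthesized block of \eqref{yi}, with the two cases $i\le 2k^{\pm}$ and $i>2k^{\pm}$ arising from whether the falling factorials $\frac{(2k^{\pm})!}{(2k^{\pm}-l)!}$ (and $\frac{(2k^{\pm}-1)!}{(2k^{\pm}-i)!}$) are still nonzero or the power $x^{2k^{\pm}-l}$ has been differentiated away to a constant (giving the $\binom{i-1}{2k^{\pm}}(2k^{\pm})!\,(f^{\pm})^{i-1-2k^{\pm}}$ term).

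The substantive part is differentiating the term $y^{\pm}\,g^{\pm}(x^{\pm},y^{\pm})$. Here I would first apply the General Leibniz Rule \eqref{prodrule} to the product $y^{\pm}\cdot g^{\pm}(x^{\pm},y^{\pm})$, which distributes $\partial_t^{\,i-1}$ as $\sum_{l}\binom{i-1}{l}(\partial_t^{\,i-1-l}g^{\pm})(\partial_t^{\,l}y^{\pm})$; note $\partial_t^{\,l}y^{\pm}=\widetilde y_{l}^{\pm}$, and the $l=0$ contribution contains $\widetilde y_0^{\pm}=y^{\pm}$ which vanishes on $\Sigma$ (i.e. at $t=0$ after shifting, or more precisely since $y^{\pm}(0,x_0)=0$) — this is why the inner sum in \eqref{yi} starts at $l=1$. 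For the factor $\partial_t^{\,i-1-l}\,g^{\pm}(x^{\pm},y^{\pm})$ I would invoke the multivariate chain rule: since $x^{\pm}$ depends linearly on $t$ (derivative $\pm\delta$) and $y^{\pm}$ has $t$-derivatives $\widetilde y_1^{\pm},\widetilde y_2^{\pm},\dots$, the Faà di Bruno expansion in the $y$-slot produces partial Bell polynomials $B_{l',j}(\widetilde y_1^{\pm},\dots,\widetilde y_{l'-j+1}^{\pm})$ multiplying mixed partials $\partial_x^{\,\bullet}\partial_y^{\,j}g^{\pm}$, while each $\partial_t$ hitting the $x$-slot contributes a $\pm\delta$; bookkeeping the binomial coefficients from splitting the total order among the $x$- and $y$-directions, together with the factor $j$ that appears when one differentiates $B_{l',j}$-type combinations (this is the standard identity $\frac{d}{dt}B_{l,j}=\dots$ accounting for how the partial Bell polynomials interact with an extra derivative), reproduces the double sum $\sum_{l=1}^{i-1}\sum_{j=1}^{l} j\binom{i-1}{l}(\pm\delta)^{i-l-1}B_{l,j}(y_1^{\pm},\dots,y^{\pm}_{l-j+1})\,\partial_x^{\,i-l-1}\partial_y^{\,j-1}g^{\pm}(x,0)$ once one evaluates at $t=0$, where $y^{\pm}=0$ so all $g^{\pm}$-derivatives are taken at $(x,0)$. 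The base case $y_1^{\pm}(x)=a^{\pm}x^{2k^{\pm}-1}+x^{2k^{\pm}}f^{\pm}(x)$ is just the differential equation itself evaluated at $t=0$, where the $y^{\pm}g^{\pm}$ term drops out.

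I expect the main obstacle to be the combinatorial bookkeeping in the $g^{\pm}$-term: correctly tracking how the single $t$-derivative interacts with the nested structure of chain rule (Faà di Bruno in $y$) inside a Leibniz product, so that the indices of the partial Bell polynomials, the orders of the mixed partials of $g^{\pm}$, and the powers of $\pm\delta$ all line up exactly as written in \eqref{yi}. A clean way to organize this is to prove the recursion by induction on $i$: assuming the formula for $\widetilde y_{i-1}^{\pm}$ (as a function of $t$, not yet evaluated), differentiate once more in $t$ and show the shape is preserved, then evaluate at $t=0$; the inductive step isolates exactly one application each of \eqref{prodrule}, \eqref{fadibruno}, and the falling-factorial derivative rule, which keeps the algebra manageable. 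The split into the regimes $i\le 2k^{\pm}$ versus $i>2k^{\pm}$ should be handled simply by observing that $\frac{(2k^{\pm})!}{(2k^{\pm}-l)!}x^{2k^{\pm}-l}$ is interpreted as $0$ once $l>2k^{\pm}$ and as the constant $(2k^{\pm})!$ when $l=2k^{\pm}$, so that the ``$i>2k^{\pm}$'' formula is really the same expression with the vanishing terms removed and the $l=2k^{\pm}$ term written out separately.
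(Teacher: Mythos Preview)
Your overall strategy---differentiate the differential equation for $y^{\pm}(t,x)$ repeatedly in $t$, evaluate at $t=0$, and use the General Leibniz Rule together with Fa\`a di Bruno---is exactly what the paper does, and your treatment of the polynomial block $a^{\pm}x^{2k^{\pm}-1}+x^{2k^{\pm}}f^{\pm}(x)$ matches the paper's line by line.

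Where you diverge is in the $y\,g^{\pm}(x,y)$ term. You propose to apply Leibniz to the product $y^{\pm}\cdot g^{\pm}(x^{\pm},y^{\pm})$ and then a bivariate chain rule to $g^{\pm}(x^{\pm},y^{\pm})$. The paper instead first expands
\[
y\,g^{\pm}(x,y)=\sum_{j\ge 1} y^{\,j}\,g_j^{\pm}(x),\qquad g_j^{\pm}(x)=\tfrac{1}{(j-1)!}\,\partial_y^{\,j-1}g^{\pm}(x,0),
\]
so that each summand $(y^{\pm})^{j}\,g_j^{\pm}(x\pm\delta t)$ is a product of two \emph{single-variable} functions of $t$. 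Then Leibniz together with the univariate Fa\`a di Bruno for the power $P_j(u)=u^{j}$ (whose only nonzero derivative at $0$ is $P_j^{(j)}(0)=j!$) gives $j!\,B_{l,j}(y_1^{\pm},\dots)$ directly, and the factor $j$ in \eqref{yi} emerges transparently as $j!/(j-1)!$ when $g_j^{\pm}$ is rewritten in terms of $\partial_y^{\,j-1}g^{\pm}$. No bivariate Fa\`a di Bruno and no extra combinatorial identity is needed.

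Your route can be made to work, but the hand-wave ``the factor $j$ appears when one differentiates $B_{l',j}$-type combinations'' is not the right mechanism. In your decomposition the Leibniz step produces a \emph{single} $y_l^{\pm}$ times $\partial_t^{\,i-1-l}g^{\pm}$, while Fa\`a di Bruno in the $y$-slot of $g^{\pm}$ yields $\partial_y^{\,m}g^{\pm}$ (not $\partial_y^{\,m-1}$) and Bell polynomials of a \emph{different} first index. To collapse this into the exact shape of \eqref{yi} you must invoke the Bell-polynomial recurrence
\[
k\,B_{n,k}(y_1,\dots)=\sum_{l=1}^{n-k+1}\binom{n}{l}\,y_l\,B_{n-l,\,k-1}(y_1,\dots),
\]
which merges the lone $y_l^{\pm}$ with $B_{\bullet,\,j-1}$ into $j\,B_{l,j}$ and simultaneously shifts $\partial_y^{\,j}$ to $\partial_y^{\,j-1}$. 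This identity is the missing ingredient in your bookkeeping. Also, the proposed induction on $i$ (``assume the formula for $\widetilde y_{i-1}^{\pm}$ as a function of $t$, differentiate once more'') does not apply as stated: \eqref{yi} records only the values at $t=0$, not a closed form valid for all $t$, so there is nothing of that shape to differentiate. The paper simply computes $\partial_t^{\,i-1}$ of the right-hand side of the ODE in one stroke and evaluates at $t=0$.
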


\begin{proof}
First of all, notice that
\begin{equation*}
\begin{aligned}
\dfrac{\partial y^{\pm}}{\partial t}(t,x)  = & \eta^{\pm}(x \pm \delta t,y^{\pm}(t,x)) \\ 
= & a^{\pm}(x \pm\delta  t)^{2k^{\pm}-1} + (x\pm\delta t)^{2k^{\pm}}f^{\pm}(x\pm\delta t) + y^{\pm}(t,x)g^{\pm} (x \pm\delta t, y^{\pm}(t,x)).
\end{aligned}
\end{equation*} 
Then,
\[
y_1^{\pm} (x)=\dfrac{\partial y^{\pm}}{\partial t}(0,x)=a^{\pm} x^{2k^{\pm}-1}+x^{2k^{\pm}}f^{\pm}(x),
\]
which coincides with the initial condition for $i=1$ of the recursive formula \eqref{yi}.

Now, denoting 
\begin{equation}\label{gi}
g^{\pm}_i(x) =\dfrac{1}{(i-1)!} \dfrac{\partial^{i-1}}{\partial y^{i-1}}g^{\pm}(x,0),
\end{equation}
 we get that
\begin{equation*}
\begin{aligned}
yg^{\pm}(x,y) =  \sum^{\infty}_{m=1} y^m g^{\pm}_m(x).
\end{aligned}
\end{equation*} 
Thus, for $i\geq 2,$
\begin{equation*}
\begin{aligned}
\dfrac{\partial^i y^{\pm}}{\partial t^i}(t,x) 
= &  \dfrac{\partial^{i-1}}{\partial t^{i-1}}\bigg( a^{\pm}(x\pm\delta t)^{2k^{\pm}+1} + (x\pm\delta t)^{2k^{\pm}}f^{\pm}(x \pm\delta  t) +  \sum^{\infty}_{j=1} y^j(t,x) g^{\pm}_j(x \pm\delta  t) \bigg) \\ 
= & a^{\pm} \dfrac{\partial^{i-1}}{\partial t^{i-1}}(x\pm\delta t)^{2k^{\pm}+1} + \dfrac{\partial^{i-1}}{\partial t^{i-1}}\Big((x\pm\delta t)^{2k^{\pm}}f^{\pm}(x \pm\delta  t)\Big)\\
&+  \dfrac{\partial^{i-1}}{\partial t^{i-1}}\bigg(\sum^{\infty}_{j=1} y^j(t,x) g^{\pm}_j(x \pm\delta  t)\bigg). 
\end{aligned}    
\end{equation*}

Clearly,
\begin{equation}\label{deriva1}
\dfrac{\partial^{i-1}}{\partial t^{i-1}}(x\pm\delta t)^{2k^{\pm}+1}\Bigg|_{t=0}= \begin{cases} (\pm \delta)^{i-1} \dfrac{(2k^{\pm}-1)!}{(2k^{\pm}-i)!}x^{2k-i}, \text{ if } i \leq 2k^{\pm}, \\
0, \text{ if } i> 2k^{\pm}.
\end{cases}
\end{equation}
Now, using the Leibniz general rule \eqref{prodrule}, we get that 
\begin{equation}\label{deriva2}
\begin{aligned}
\dfrac{\partial^{i-1}}{\partial t^{i-1}}&\Big((x\pm\delta t)^{2k^{\pm}}f^{\pm}(x \pm\delta  t)\Big)\Bigg|_{t=0}=\\
&\begin{cases}\displaystyle (\pm \delta)^{i-1} \sum_{l=0}^{i-1} {{i-1}\choose{l}}  \dfrac{(2k^{\pm})!}{(2k^{\pm}-l)!}x^{2k^{\pm}-l} {f^{\pm}}^{(i-1-l)} (x),& \text{ if } i \leq 2k^{\pm}, \vspace{0.2cm}\\
\displaystyle (\pm \delta)^{i-1} \sum_{l=0}^{2k^{\pm}-1} {{i-1}\choose{l}}  \dfrac{(2k^{\pm})!}{(2k^{\pm}-l)!}x^{2k^{\pm}-l}{f^{\pm}}^{(i-l-1)}(x),& \text{ if } i> 2k^{\pm}.
\end{cases}
\end{aligned}
\end{equation}
and
\begin{equation}\label{deriva3}
\begin{aligned}
\dfrac{\partial^{i-1}}{\partial t^{i-1}}&\bigg(\sum^{\infty}_{j=1} (y^{\pm}(t,x))^j g^{\pm}_j(x \pm\delta  t)\bigg)\Bigg|_{t=0}=\\
&\sum_{j=1}^{\infty}\sum_{l=0}^{i - 1}{i-1 \choose l}(\pm \delta)^{i-l-1}\dfrac{\partial^l}{\partial t^l}\big(y^{\pm}(t,x)^j\big)\Bigg|_{t=0} {g_j^{\pm}}^{(i-l-1)}(x).
\end{aligned}
\end{equation}
In addition, denoting $P_j(y)=y^j,$ we get from the Fa\'{a} di Bruno's Formula \eqref{fadibruno} that
\begin{equation}\label{deriva3'}
\begin{aligned}
\dfrac{\partial^l}{\partial t^l}(y^{\pm}(t,x)^j)\Bigg|_{t=0}=&\dfrac{\partial^l}{\partial t^l}P_j(y^{\pm}(t,x))\Bigg|_{t=0}= \sum_{m=1}^l  P_j^{(m)}(0) B_{l,m}(y^{\pm}_1(x),\dots , y^{\pm}_{l-m+1}(x))\\
=&\begin{cases}
0, \text{ if } l<j,\vspace{0.2cm}\\
j!B_{l,j}(y^{\pm}_1(x),\dots , y^{\pm}_{l-j+1}(x)), \text{ if } l\geq j.
\end{cases}
\end{aligned}  
\end{equation}
Therefore, substituting \eqref{deriva3'} into \eqref{deriva3} and taking \eqref{gi} into account, we obtain
\begin{equation}\label{deriva4}
\begin{aligned}
\dfrac{\partial^{i-1}}{\partial t^{i-1}}&\bigg(\sum^{\infty}_{j=1} (y^{\pm}(t,x))^j g^{\pm}_j(x \pm\delta  t)\bigg)\Bigg|_{t=0} \\ 
& = \sum_{j=1}^{\infty}\sum_{l=j}^{i-1} {i-1\choose l} (\pm \delta)^{i-l-1}j!B_{l,j}(y^{\pm}_1(x),\dots , y^{\pm}_{l-j+1}(x))  \dfrac{1}{(j-1)!} \dfrac{\partial^{j+i-l-2}g^{\pm}}{\partial x^{i-l-1} \partial y^{j-1}}(x,0)\\
& = \sum_{l=1}^{i-1} \sum_{j=1}^{l}j {i-1\choose l} (\pm \delta)^{i-l-1}B_{l,j}(y^{\pm}_1(x),\dots , y^{\pm}_{l-j+1}(x))  \dfrac{\partial^{j+i-l-2}g^{\pm}}{\partial x^{i-l-1} \partial y^{j-1}}(x,0).
\end{aligned}
\end{equation}

Finally, putting \eqref{deriva1}, \eqref{deriva2}, and \eqref{deriva4} together we get the recursive formula \eqref{yi} for $y_i^{\pm}(x),$ $i\geq2.$
\end{proof}

	\begin{lemma} \label{lema:mu}
	The value $\mu_i$ vanishes for $i=1,\ldots,  2k^{\pm}-1.$
\end{lemma}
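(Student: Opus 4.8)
The plan is to show directly from the recursive formula \eqref{yi} that the lowest-order term in the power series of each $y_i^{\pm}(x)$ around $x=0$ has degree at least $2k^{\pm}-i$, so that $(y_i^{\pm})^{(i-j)}(0)=0$ whenever $i-j < 2k^{\pm}-i$, i.e. whenever $i < k^{\pm}+j/2$; combined with $j\leq i$ this will force all the terms in \eqref{mui} to vanish for $i \leq 2k^{\pm}-1$. More precisely, I will prove by strong induction on $i$ the claim that for $1 \leq i \leq 2k^{\pm}$, the function $y_i^{\pm}(x)$ vanishes at $x=0$ to order at least $2k^{\pm}-i$; equivalently $x^{i-2k^{\pm}}y_i^{\pm}(x)$ is analytic at $0$ (with the convention that no claim is needed once $i>2k^{\pm}$, since only $i\leq 2k^{\pm}-1$ matters for the lemma).

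First I would establish the base case: by \eqref{yi}, $y_1^{\pm}(x) = a^{\pm}x^{2k^{\pm}-1}+x^{2k^{\pm}}f^{\pm}(x)$ visibly vanishes to order exactly $2k^{\pm}-1$. For the inductive step with $2\leq i\leq 2k^{\pm}$, I examine the three groups of terms on the right-hand side of the formula for $y_i^{\pm}$. The pure $a^{\pm}$ term is a multiple of $x^{2k^{\pm}-i}$, as required. Each term in the first sum is a multiple of $x^{2k^{\pm}-l}{f^{\pm}}^{(i-1-l)}(x)$ with $0\leq l\leq i-1$; since $f^{\pm}$ is analytic at $0$ (Theorem \ref{teo:coef}), this vanishes to order at least $2k^{\pm}-l \geq 2k^{\pm}-(i-1) > 2k^{\pm}-i$. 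For the double sum, the crucial point is to bound the order of vanishing of $B_{l,j}(y_1^{\pm}(x),\dots,y_{l-j+1}^{\pm}(x))$: by the induction hypothesis each argument $y_r^{\pm}(x)$ vanishes to order at least $2k^{\pm}-r$, and the partial Bell polynomial $B_{l,j}$ is a sum of monomials $\prod_r (y_r^{\pm})^{b_r}$ with $\sum_r b_r = j$ and $\sum_r r\, b_r = l$; hence each such monomial vanishes to order at least $\sum_r b_r(2k^{\pm}-r) = 2k^{\pm}j - l$. Since $\dfrac{\partial^{j+i-l-2}g^{\pm}}{\partial x^{i-l-1}\partial y^{j-1}}(x,0)$ is analytic (again Theorem \ref{teo:coef}), each term of the double sum vanishes to order at least $2k^{\pm}j-l$. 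It remains to check $2k^{\pm}j - l \geq 2k^{\pm}-i$ for the relevant range $1\leq l\leq i-1$, $1\leq j\leq l$: since $j\geq 1$ we have $2k^{\pm}j - l \geq 2k^{\pm}-l \geq 2k^{\pm}-(i-1) > 2k^{\pm}-i$. This completes the induction.

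Having the order bound, I finish the proof of the lemma as follows. Fix $i$ with $1\leq i\leq 2k^{\pm}-1$. In the sum \eqref{mui} the index $j$ ranges over $1\leq j\leq i$, and the coefficient $(y_j^{\pm})^{(i-j)}(0)$ is, up to a factorial, the coefficient of $x^{i-j}$ in the Taylor expansion of $y_j^{\pm}$. By the order bound just proved, that expansion starts at degree at least $2k^{\pm}-j$, so the coefficient of $x^{i-j}$ is zero whenever $i-j < 2k^{\pm}-j$, i.e. whenever $i < 2k^{\pm}$, which holds throughout. Therefore every term in \eqref{mui} vanishes and $\mu_i^{\pm}=0$ for $i=1,\ldots,2k^{\pm}-1$, as claimed.

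The main obstacle I anticipate is the bookkeeping in the double-sum step: one must correctly track how the order of vanishing propagates through the partial Bell polynomial $B_{l,j}$ using the defining constraints $\sum b_r = j$, $\sum r b_r = l$, and verify that the crude bound $2k^{\pm}j - l$ always beats $2k^{\pm}-i$ in the full index range (including the boundary cases $l=i-1$ and $j=l$). Everything else is a direct reading-off of orders of vanishing from \eqref{yi}, using the analyticity of $f^{\pm}$ and $g^{\pm}$ granted by Theorem \ref{teo:coef}. Note that the case $i>2k^{\pm}$ plays no role here, so the separate branch of \eqref{yi} need not be analysed for this lemma.
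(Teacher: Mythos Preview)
Your proof is correct and follows essentially the same route as the paper's: both argue by induction on $i$ that $y_i^{\pm}(x)$ carries a factor $x^{2k^{\pm}-i}$ for $i\le 2k^{\pm}$, and then read off from \eqref{mui} that every summand $(y_j^{\pm})^{(i-j)}(0)$ vanishes when $i\le 2k^{\pm}-1$. Your treatment of the Bell-polynomial term is in fact a bit more explicit than the paper's (you use both constraints $\sum b_r=j$ and $\sum r\,b_r=l$ to get the sharp exponent $2k^{\pm}j-l$, whereas the paper simply invokes homogeneity of degree $j$ and asserts the factor $x^{2k^{\pm}-i+1}$), but the content is the same. One minor remark: citing Theorem~\ref{teo:coef} for the analyticity of $f^{\pm},g^{\pm}$ is harmless but slightly out of order, since this lemma is a preliminary to that theorem; the analyticity is already established in the canonical-form discussion of Section~\ref{sec:cf}.
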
 
\begin{proof}
	First of all, we proceed by induction in order to prove that 
	\begin{equation} \label{eq:indmu}
	y_i^{\pm}(x) = x^{2k^{\pm} - i}R_i^{\pm}(x)
	\text{ for } i \leq 2k^{\pm}, 
	\end{equation} where $R_i^{\pm}(x)$ is a smooth function. 
	For $i=1,$ \eqref{eq:indmu} holds. Indeed,
	\[ 
	y_1^{\pm} (x)= a^{\pm} x^{2k^{\pm}-1}+x^{2k^{\pm}}f^{\pm}(x) =x^{2k^{\pm} - 1}(a^{\pm} + xf^{\pm}(x)).\] 
	Now, let $i \leq 2k^{\pm}.$ Recall that, from \eqref{yi}, 
	\begin{equation*}
	\begin{aligned}
	y_i^{\pm} (x) = &  a^{\pm}(\pm \delta)^{i-1} \dfrac{(2k^{\pm}-1)!}{(2k^{\pm}-i)!}x^{2k-i}  +\sum_{l=0}^{i-1} {{i-1}\choose{l}}  \dfrac{(2k^{\pm})!}{(2k^{\pm}-l)!}x^{2k^{\pm}-l} {f^{\pm}}^{(i-1-l)} (x) & \\  & +\sum_{l=1}^{i-1} \sum_{j=1}^{l}j {i-1\choose l} (\pm \delta)^{i-l-1}B_{l,j}(y^{\pm}_1(x),\dots , y^{\pm}_{l-j+1}(x))  \dfrac{\partial^{j+i-l-2}g^{\pm}}{\partial x^{i-l-1} \partial y^{j-1}}(x,0).
	\end{aligned}
	\end{equation*}
	Suppose that \eqref{eq:indmu} holds for all $s \leq i-1,$ that is, $y_s^{\pm}(x) = x^{2k^{\pm} - s}R_s(x).$ Then, taking into account that $B_{l,j}$ is a homogeneous polynomial of degree $j$ with $l-j+1$ variables, we have that 
	\[\sum_{l=1}^{i-1}\sum_{j=1}^{l}j {s-1\choose l} (\pm \delta)^{s-l-1}B_{l,j}(y^{\pm}_1(x),\dots , y^{\pm}_{l-j+1}(x))  \dfrac{\partial^{j+i-l-2}g^{\pm}}{\partial x^{i-l-1} \partial y^{j-1}}(x,0) = x^{2k^{\pm}-i+1}T(x),
	\] where $T$ is a smooth function. Then, 
	\begin{equation*}
	\begin{aligned}
	y_i^{\pm} (x) = &  a^{\pm}(\pm \delta)^{i-1} \dfrac{(2k^{\pm}-1)!}{(2k^{\pm}-i)!}x^{2k^{\pm}-i}   +\sum_{l=0}^{i-1} {{i-1}\choose{l}}  \dfrac{(2k^{\pm})!}{(2k^{\pm}-l)!}x^{2k^{\pm}-l} {f^{\pm}}^{(i-1-l)} (x) + x^{2k^{\pm}-i+1}T(x) \\
	= & x^{2k^{\pm}-i}R_i^{\pm}(x),
	\end{aligned}
	\end{equation*} which implies that  \eqref{eq:indmu} holds for all $i \leq 2k^{\pm}.$ 
	
	From \eqref{mui} and \eqref{eq:indmu}, we conclude that
	\begin{equation*}
	\mu_i^{\pm}= \dfrac{1}{i!}\sum_{j=1}^{i} (\pm \delta)^j {{i}\choose{j}} \dfrac{\partial^{i-j}y_j^{\pm}}{\partial x^{i-1}}(0) = \dfrac{1}{i!}\sum_{j=1}^{i} (\pm \delta)^j {{i}\choose{j}}  \dfrac{\partial^{i-j}}{\partial x^{i-1}}\left(x^{2k^{\pm}-j}R_j(x)\right) \bigg|_{x=0}=0,
	\end{equation*}
		for $i\leq 2k^{\pm} - 1.$
\end{proof}

\subsection{Proof of Theorem \ref{teo:coef}}\label{proof:coef}
Suppose that $Z^{\pm}$ are analytic vector fields and assume that the piecewise analytic Filippov vector field \eqref{sistemainicial} has a $(2k^+,2k^-)$-monodromic tangential singularity at the origin, for positive integers $k^+$ and $k^-.$ From the comments of Section \ref{sec:cf}, we know that there exists a small neighborhood $U\subset\R^2$ of the origin such that \eqref{sistemainicial} is equivalent to the canonical form \eqref{sistemacanonico} through a time rescaling, where $\delta,$ $a^{\pm},$  $f^{\pm}(x),$ and $g^{\pm}(x,y)$ are given by \eqref{delta}, \eqref{value:a}, and \eqref{auxfunc},  respectively. In addition, since  $X^{\pm}(x,y)\neq0$ for every $(x,y)\in U,$  we get that the functions $f^{\pm}$ and $g^{\pm}$ are analytic in a neighborhood of $x=y=0.$ 

Now, working out the identity $\mu^{\pm}(x_0)=\mu^{\pm}(\varphi^{\pm}(x_0)) ,$ we obtain
	\begin{equation} \label{muinicial}
	\begin{aligned}
	\sum_{n=1}^{\infty}\mu^{\pm}_nx_0^n =& \sum_{n=1}^{\infty}\mu^{\pm}_n({ \sum_{j=1}^{\infty}\alpha^{\pm}_jx_0^j})^n \\
	=& \sum_{n=1}^{\infty}\mu_n^{\pm}\sum_{i=n}^{\infty}\hat{B}_{i,n}(\alpha_1^{\pm}, \dots, \alpha^{\pm}_{i-n+1})x_0^i \\
	=&  \sum_{n=1}^{\infty}\sum_{i=n}^{\infty}\mu_n^{\pm}\hat{B}_{i,n}(\alpha_1^{\pm}, \dots, \alpha^{\pm}_{i-n+1})x_0^i. \\
	\end{aligned}
	\end{equation} 
	In the second equality above, we are using the multinomial formula \eqref{multi}.

	By comparing the coefficients of $x_0^i$ in both sides of equality \eqref{muinicial} and taking Lemma \ref{lema:mu} into account, we conclude that
	\begin{equation*} \label{coef}
	\mu_i^{\pm}= \sum_{j=2k^{\pm}}^{i} \mu_j^{\pm} \hat{B}_{i,j}(\alpha_1^{\pm}, \dots, \alpha^{\pm}_{i-j+1}),\quad i \geq 2k^{\pm}.
	\end{equation*}
	
	Doing some computations, we have that 
\begin{equation} \label{expressaomun}
	\begin{aligned}
	\mu_i^{\pm} =& \mu_{2k^{\pm}}^{\pm}\hat{B}_{i,2k^{\pm}}(\alpha_1^{\pm}, \dots, \alpha^{\pm}_{i-2k^{\pm}+1}) + \sum_{j=2k^{\pm} + 1}^{i} \mu_j^{\pm} \hat{B}_{i,j}(\alpha_1^{\pm}, \dots, \alpha^{\pm}_{i-j+1}) \\
	=& \mu_{2k^{\pm}}^{\pm}\hat{B}_{i, 2k^{\pm}}(\alpha_1^{\pm}, \dots, \alpha^{\pm}_{i-2k^{\pm}},0) + \mu_{2k^{\pm}}^{\pm}{\alpha_1^{\pm}}^{2k^{\pm}-1}\alpha^{\pm}_{i-2k^{\pm}+1}2k^{\pm} \\ &+\sum_{j=2k^{\pm} + 1}^{i} \mu_j^{\pm} \hat{B}_{i,n}(\alpha_1^{\pm}, \dots, \alpha^{\pm}_{i-j+1}).
	\end{aligned}
\end{equation}
	
	Therefore, isolating $\alpha^{\pm}_{i-2k^{\pm}+1}$ in \eqref{expressaomun}, we get
	\begin{equation}\label{alphapre}
	\alpha^{\pm}_{i-2k^{\pm}+1} = \dfrac{\mu_i^{\pm} -\mu_{2k^{\pm}} \hat B_{i,2k^{\pm}}\big(\alpha_1^{\pm},\ldots,\alpha^{\pm}_{i-2k^{\pm}},0\big) - \displaystyle \sum_{j=2k^{\pm}+1}^{i} \mu_j \hat B_{i,j}\big(\alpha_1^{\pm},\ldots,\alpha^{\pm}_{i-j+1}\big)}{{\alpha^{\pm}_1}^{2k^{\pm}-1}2k^{\pm}\mu_{2k^{\pm}}}.
	\end{equation}
	Finally, taking into account that $\alpha_1^{\pm}=-1 $ and doing a change of the index in \eqref{alphapre}, we obtain the recurrence \eqref{alpha} for $\alpha_i^{\pm}.$ This concludes the proof of Theorem \ref{teo:coef}.

\section{Bifurcation of limit cycles}\label{sec:limc}

This section is devoted to the proofs of Theorems \ref{thm:hopf} and \ref{thm:genhopf}, which are based on the {\it Malgrange Preparation Theorem} (see \cite{malgrange1971}) and {\it Implicit Function Theorem}.

\subsection{Proof of Theorem \ref{thm:hopf}}

Let $Z_\lambda$ be a 1-parameter family of Filippov vector fields \eqref{sistemainicial} having a $(2k^+,2k^-)$-monodromic tangential singularity at the origin for every $\lambda$ in an interval $I.$ Let $V_2(\lambda),$ $V_3(\lambda),$ and $V_4(\lambda)$ be, respectively, the second, the third, and the forth Lyapunov coefficients. 
Accordingly, the displacement function of $Z_{\lambda}$ around the origin writes 
\begin{equation}\label{DeltaEx}
\Delta(x; \lambda)  = V_2(\lambda) x^2 + V_3(\lambda) x^3 +V_4(\lambda) x^4 + O(x^5)=x^2\Gamma(x;\lambda)
\end{equation}
where
\begin{equation} \label{thmhopf:dispfun}
\begin{aligned}
\Gamma(x; \lambda) & = V_2(\lambda) + V_3(\lambda) x +V_4(\lambda) x^2 + O(x^3).
\end{aligned}
\end{equation}

By hypothesis, there exists $\lambda_0\in I$ such that $V_2(\lambda_0)=0,$ $V_2'(\lambda_0)=d\neq0,$ and $V_4(\lambda_0)=\ell\neq0.$ Thus,
\[
\Gamma(0;\lambda_0 )= V_2(\lambda_0) = 0, \quad \dfrac{\partial^2 \Gamma}{\partial x^2}(0;\lambda_0) = 2V_4(\lambda_0) \neq 0, 
\]
and, from Theorem \ref{teo:oddindex},
\[
\dfrac{\partial \Gamma}{\partial x}(0;\lambda_0) = V_3(\lambda_0) = 0.
\]
Therefore, as a consequence of the Malgrange Preparation Theorem (see \cite{malgrange1971}), there exists a small neighborhood $W\subset \R^2$ of $(0,\la_0)$ and smooth functions $c(x;\la),$ $a_0(\la),$ and $a_1(\la)$ such that $c(x,\la)\neq0$ and
\begin{equation}\label{thmhopf:dispfun2}
	\Gamma(x;\lambda) = c(x, \lambda) (x^2 + a_1(\lambda)x + a_0(\lambda )) ,
\end{equation} for every $(x,\lambda)\in W.$

From \eqref{thmhopf:dispfun} and \eqref{thmhopf:dispfun2} we obtain that 
\[
a_0(\lambda) =  \dfrac{V_2(\lambda)}{c(0,\lambda)}\,\text{ and }\,
a_1(\lambda) =\dfrac{V_3(\lambda) - \partial_x c(0,\lambda) a_0(\lambda)}{c(0,\lambda) }.
\]
Consequently, $a_0(\la_0)=a_1(\la_0)=0.$ In addition, one can see that 
\[
c(0,\la_0)=V_4(\la_0)=\ell\,\text{ and }\, a_0'(\la_0)=\dfrac{V_2'(\la_0)}{c(0,\la_0)}=\dfrac{d}{\ell}.
\]

Now,  taking the hypothesis $d\ell(\la-\la_0)<0$ into account, we can easily compute the unique positive root of \eqref{thmhopf:dispfun2} in $W$ as
\[
\begin{aligned}
x^*=& \dfrac{-a_1(\lambda) + \sqrt{a_1(\lambda)^2 - 4a_0(\lambda)}}{2}= \sqrt{\dfrac{-V_2'(\lambda_0)  (\lambda - \lambda_0)}{V_4(\lambda_0)}} +\mathcal{O}(\lambda-\lambda_0)\\
 = & \sqrt{\dfrac{-d (\lambda - \lambda_0)}{\ell}} +\mathcal{O}(\lambda-\lambda_0).
\end{aligned}
\] 
Thus, there exists a unique limit cycle bifurcating from the origin which intersects the discontinuity manifold for $x>0$ at $(x^*(\la),0),$ which lies $\sqrt{|\lambda - \lambda_0|}$-close to the origin. Moreover, the stability of such a limit cycle coincides with the stability of the monodromic singularity at the origin for $\la=\la_0,$ that is, it is asymptotically stable (resp. unstable) provided that $\ell<0$ (resp. $\ell>0$). This information could also be obtained by computing the derivative of \eqref{DeltaEx} at $x=x^*.$

\subsection{Proof of Theorem \ref{thm:genhopf}}

Let $Z_\Lambda$ be an $n$-parameter family of Filippov vector fields \eqref{sistemainicial} having a $(2k^+,2k^-)$-monodromic tangential singularity at the origin for every $\Lambda$ in an open set $U\subset\R^n.$ Let $V_{i}(\Lambda)$ be the $i-$th Lyapunov coefficient, for $i=1,2\dots,2n+2.$ Accordingly, the displacement function of $Z_{\lambda}$ around the origin writes
\begin{equation*}
	\Delta(x; \Lambda) = \sum_{i=2}^{2n+2} V_i(\Lambda) x^i+ O(x^{2n+3})=x^2 \Gamma(x; \Lambda),
\end{equation*}
where
\begin{equation} \label{eq1:genhopf}
 	\Gamma(x;\Lambda) =\sum_{i=2}^{2n+2} V_i(\Lambda) x^{i-2} + O(x^{2n+1}).
\end{equation} 
Notice that
\begin{equation*}
\begin{aligned}
\dfrac{\partial^i \Gamma}{\partial x^i}(0;\Lambda) = i! V_{i+2}(\Lambda), \,\text{ for } \, i=0,\ldots, 2n. 
\end{aligned}
\end{equation*} 

By hypothesis, there exists $\Lambda_0\in U$ such that $\mathcal V_n(\Lambda_0)=0,$ $\det (D\mathcal V_n(\Lambda_0))\neq0,$ and $V_{2n+2}(\Lambda_0)\neq0,$ where $\mathcal V_n=(V_2,V_4,\ldots,V_{2n}):U\rightarrow \R^n.$ Thus, 
\[
\dfrac{\partial^{2i} \Gamma}{\partial x^{2i}}(0;\Lambda_0) = 0, \,\text{ for }\, i =1,\dots, n-1, \,\text{ and }\,
	\dfrac{\partial^{2n} \Gamma}{\partial x^{2n}}(0;\Lambda_0) = (2n)! V_{2n+2}(\Lambda_0)  \neq 0.
\]
In addition, from Theorem \ref{teo:oddindex},
\[
\dfrac{\partial^{2i+1} \Gamma}{\partial x^{2i+1}}(0;\Lambda_0) = 0, \,\text{ for }\, i =1,\dots, n-1.
\]
Therefore, as a consequence of the Malgrange Preparation Theorem (see \cite{malgrange1971}), there exists a small neighborhood $W\subset \R\times\R^n$ of $(0,\Lambda_0)$ and smooth functions $c(x;\Lambda),$ and $a_i(\Lambda),$ for $i=0,\ldots,2n-1,$ such that $c(x,\Lambda)\neq0$ and
\begin{equation} \label{eq2:genhopf}
\begin{aligned}
	\Gamma(x; \Lambda) = & c(x,\Lambda)(x^{2n} + a_{2n-1}(\Lambda)x^{2n-1} + \dots + a_1(\Lambda)x+a_0(\Lambda)),
\end{aligned}
\end{equation}
for every $(x,\Lambda)\in W.$

From  \eqref{eq1:genhopf} and \eqref{eq2:genhopf}, we have that 	
\[
i! V_{i+2}(\Lambda)=\dfrac{\partial^i \Gamma}{\partial x^i}(0; \Lambda) =\sum_{j=0}^{i} {i \choose j} \partial^{i-j} c(0, \Lambda) j! a_j(\Lambda).
\]
Hence, denoting $\mathcal{A}(\Lambda)=\Big(a_0(\Lambda),\ldots,a_{2n-1}(\Lambda)\Big)$ and $\mathcal{V}(\Lambda)=\Big(2! V_2(\Lambda),\ldots,(2n-1)! V_2(\Lambda)\Big),$ we see that
\[
M(\Lambda) \mathcal{A}(\Lambda)=\mathcal{V}(\Lambda),
\]
where $M(\Lambda)$ is a lower triangular matrix with every entry in the diagonal given by $c(0,\Lambda).$ Therefore, $M(\Lambda)$ is invertible for every $\Lambda$ in a small neighborhood of $\Lambda_0$ and
\[
 \mathcal{A}(\Lambda)=M(\Lambda)^{-1}\mathcal{V}(\Lambda).
\]
Consequently,   $\mathcal{A}(\Lambda_0) = (0,\ldots,0)$  
and
$D\mathcal{A}(\Lambda_0) = M^{-1}(\Lambda_0)D\mathcal{V}(\Lambda_0).$
Thus, since by hypothesis $D\mathcal{V}_n(\Lambda_0)$ is invertible, we conclude that $D\mathcal{V}(\Lambda_0)$ and, consequently, $D\mathcal{A}(\Lambda_0)$ are full rank matrices, that is, $\textrm{rank}(D\mathcal{V}(\Lambda_0))=\textrm{rank}(D\mathcal{A}(\Lambda_0)) = n.$
	
Now, for $(x,\Lambda)\in W,$ denote 
\[
P(x,\Lambda)  =\dfrac{\Gamma(x;\Lambda)}{c(x, \Lambda) }= x^{2n} + a_{2n-1}(\Lambda) x^{2n-1}+ \dots +  a_1(\Lambda)x + a_0(\Lambda).
\]
Let $x_i,$ $i=1,2,\ldots,n,$ be distinct $n$  positive values and $\e>0.$ In what follows, we will conclude the proof of Theorem \ref{thm:genhopf} by showing that for $\e>0$ sufficiently small there exists $\Lambda^*(\e)$ sufficiently close to $\Lambda_0$ such that
\[
 P(\e x_i, \Lambda^*(\e)) = 0, \quad \text{for}\quad i=1, 2, \dots, n.
 \]
 This will imply that $Z_{\Lambda^*(\e)}$ has $n$ limit cycles bifurcating from the origin for $\e>0$ sufficiently small.
 
First, consider the system of equations
\begin{equation}\label{systeoE}
 P(\e x_i, \Lambda) = 0, \quad \text{for}\quad i=1, 2, \dots, n,
\end{equation}
which is equivalent to
\begin{equation}\label{sys2teoE}
N(\e) \mathcal{A}(\Lambda)=b(\e),
\end{equation}
where 
\[
N(\e)=\left(\begin{array}{cccc} 
1 & \e x_1 & \dots & (\e x_1)^{2n-1} \\
1 & \e x_2 & \dots & (\e x_2)^{2n-1} \\
\vdots & \vdots & \ddots & \vdots  \\
1 & \e x_n & \dots & (\e x_n)^{2n-1}
\end{array} \right) \quad \text{and}\quad b(\e)=-\left(\begin{array}{c} 
	(\e x_1)^{2n} \\
	(\e x_2)^{2n} \\
	\vdots \\
	(\e x_n)^{2n}
 \end{array} \right).
\]
Notice that the matrix $N(\e)$ is composed by two blocks, $N(\e)=\Big(\,T(\e)\quad S(\e)\,\Big),$ where $T(\e)$ and $S(\e)$ are square matrices given by
\[
T(\e)=\left(\begin{array}{cccc} 
1 & \e x_1 & \dots & (\e x_1)^{n-1} \\
1 & \e x_2 & \dots & (\e x_2)^{n-1} \\
\vdots & \vdots & \ddots & \vdots  \\
1 & \e x_n & \dots & (\e x_n)^{n-1}
\end{array} \right) \quad \text{and}\quad S(\e)=\left(\begin{array}{ccc} 
 (\e x_1)^{n}  & \dots & (\e x_1)^{2n-1} \\
 (\e x_1)^{n}  & \dots & (\e x_2)^{2n-1} \\
\vdots  & \ddots & \vdots  \\
 (\e x_1)^{n} &  \dots & (\e x_n)^{2n-1}
\end{array} \right).
\]
Since $T(\e)$ is a {\it Vandermonde Matrix}, we know that
\[ 
\det (T(\e))= \e^n\prod_{1\leq i<j\leq n}(x_j-x_i)
\] 
and, therefore, invertible for $\e\neq0.$
Thus, consider the smooth matrix-valued functions  $\widetilde N(\e)=T(\e) ^{-1} N(\e)$ and $\tilde b(\e)=T(\e) ^{-1} b(\e)$ which, because of the factor $\e^n$ in both $S(\e)$ and $b(\e),$ can be smoothly extended for $\e=0$ as $\widetilde N(0)=\Big(\,I_n\quad 0_n\,\Big)$  and $\tilde b(0)=0.$ Now, define the function $F:\R^{n}\times \R\rightarrow \R^{n}$ as
\[
F(\Lambda,\e)=\widetilde N(\e) \mathcal{A}(\Lambda)- \tilde b(\Lambda).
\]
Clearly, the systems of equations \eqref{systeoE} and \eqref{sys2teoE} are equivalent to $F(\Lambda,\e)=0.$  Notice that $F(\Lambda_0,0)=0$ and, since $\widetilde N(0)$ and  $D\mathcal{A}(\Lambda_0)$  are full rank matrices, we conclude that the square matrix $\dfrac{\partial F}{\partial \Lambda}(\Lambda_0,0)=\widetilde N(0) D\mathcal{A}(\Lambda_0)$  has full rank and, therefore, is non-singular. Then, from the implicit function theorem, we obtain for $\e>0$ sufficiently small a smooth function $\Lambda^*(\e)$ such that $\Lambda(0)=\Lambda_0$ and $F(\Lambda^*(\e),\e)=0$ for $\e>0$ sufficiently small.  This concludes the proof of Theorem \ref{thm:genhopf}.

\section*{Appendix A: Algorithms}\label{sec:alg}

\lstset{language=Mathematica}
\lstset{basicstyle={\sffamily\footnotesize},
	numbers=left,
	numberstyle=\tiny\color{gray},
	numbersep=5pt,
	breaklines=true,
	captionpos={t},
	frame={lines},
	rulecolor=\color{black},
	framerule=0.5pt,
	columns=flexible,
	tabsize=2
}

In this appendix, based on Theorem \ref{teo:coef}, we present an implemented Mathematica algorithm for computing the coefficients $\alpha^{+}_n$ and $\alpha^{-}_n$ of the series \eqref{eq:series} of the half-return maps $\varphi^{+}$ and $\varphi^{-}$ and, consequently, the Lyapunov coefficients $V_n$'s.

In what follows, we are denoting  $ k^+ = $ {\sffamily kp},  $k^-=$ {\sffamily kn}, $a^+ =$ {\sffamily ap} , $ a^-=$ {\sffamily an},  $\mu^+=\mu${\sffamily p}, and  $\mu^-=\mu${\sffamily n}. In addition, {\sffamily yp0[i]} and {\sffamily yp1[i]} denote $y_i^+,$ respectively, for $i \leq 2k^+$ and for $i > 2k^+.$ Analogously, {\sffamily yn0[i]} and {\sffamily yn1[i]} denote $y_i^-,$ respectively, for $i \leq 2k^-$ and for $i > 2k^-.$

In order to run the codes for computing the first $N$ Lyapunov coefficients, we have to specify the values for {\sffamily kp, kn}, $\delta,$ and {\sffamily imax}=$N.$

\begin{lstlisting}[language=Mathematica,caption={Mathematica's algorithm for computing $y_i^{\pm}.$ \vspace{0.05cm}},mathescape=true]
yp0[1] = ap x^(2 kp - 1) + x^(2 kp) fp[x] 
yp0[i_] := $\delta$^(i -1) (ap (2 kp - 1)!/(2 kp - i)! x^(2 kp - i) + Sum[Binomial[i - 1, l] (2 kp)!/(2 kp - l)! x^(2 kp - l) D[fp[x], {x, i - 1 - l}], {l, 0, i - 1}]) + Sum[Sum[j Binomial[i - 1, l] $\delta$^(i - l - 1) BellY[l, j,Yp[l - j + 1, x]] (D[D[gp[x, y], {y, j - 1}], {x, i - 1 - l}] /. y -> 0), {j, 1, l}], {l, 1, i - 1}] 
yp1[i_] := $\delta$^(i - 1) (Binomial[i - 1, 2 kp] (2 kp)! D[fp[x], {x, i - 1 - 2 kp}] + Sum[Binomial[i - 1, l] (2 kp)!/(2 kp - l)! x^(2 kp - l) D[fp[x], {x, i - 1 - l}], {l, 0, 2 kp - 1}]) + Sum[Sum[j Binomial[i - 1, l] $\delta$^(i - l - 1) BellY[l, j, Yp[l - j + 1,x]] (D[D[gp[x, y], {y, j - 1}], {x, i -1 - l}] /. y -> 0), {j,1, l}], {l, 1, i - 1}]
Yp[1] = {yp0[1]};
For[i = 2, i <= 2 kp, i++, Yp[i] = Join[Yp[i - 1],{yp0[i]}];]
For[i = 2 kp + 1, i <= 2 kp + imax, i++, Yp[i] = Join[Yp[i - 1], {yp1[i]}];]
For[i = 1, i <= 2 kp + imax, i++, yp[i] = Yp[2 kp + imax][[i]]]

yn0[1] = an x^(2 kn - 1) + x^(2 kn) fn[x]
yn0[i_] := (-$\delta$)^(i - 1) (an (2 kn - 1)!/(2 kn - i)! x^(2 kn - i) + Sum[Binomial[i - 1, l] (2 kn)!/(2 kn - l)! x^(2 kn - l) D[fn[x], {x, i - 1 - l}], {l, 0, i - 1}]) + Sum[Sum[j Binomial[i - 1, l] (-$\delta$)^(i - 1 - l) BellY[l, j, Yn[l - j + 1,x]] (D[D[gn[x, y], {y, j - 1}], {x, i - 1 - l}] /. y -> 0), {j,1, l}], {l, 1, i - 1}]
yn1[i_] := (-$\delta$)^(i -1) (Binomial[i - 1, 2 kn] (2 kn)! D[fn[x], {x, i - 1 - 2 kn}] + Sum[Binomial[i - 1, l] (2 kn)!/(2 kn - l)! x^(2 kn - l) D[fn[x], {x, i - 1 - l}], {l, 0, 2 kn - 1}]) + Sum[Sum[j Binomial[i - 1, l] (-$\delta$)^(i - 1 - l) BellY[l, j,Yn[l - j + 1, x]] (D[D[gn[x, y], {y, j - 1}], {x, i - 1 - l}] /. y -> 0), {j, 1, l}], {l, 1, i - 1}]
Yn[1] = {yn0[1, x]};
For[i = 2, i <= 2 kn, i++, Yn[i] = Join[Yn[i - 1], {yn0[i]}];]
For[i = 2 kn + 1, i <= 2 kn + imax, i++, Yn[i] = Join[Yn[i - 1], {yn1[i]}];]
For[i = 1, i <= 2 kn+ imax, i++, yn[i] = Yn[2 kn + imax][[i]]]
\end{lstlisting}

\begin{lstlisting}[language=Mathematica,caption={Mathematica's algorithm for computing  $\mu_n^{\pm}.$ \vspace{0,05cm}},mathescape=true]
$\mu$p[n_] := 1/n! Sum[(-$\delta$)^j Binomial[n, j] D[yp[j, x], {x, n - j}] /. x -> 0, {j, 1, n}]
$\mu$n[n_] := 1/n! Sum[$\delta$^j Binomial[n, j] D[yn[j, x], {x, n - j}] /.x -> 0, {j, 1, n}]
\end{lstlisting}

\begin{lstlisting}[language=Mathematica,caption={Mathematica's algorithm for computing  $\alpha_n^{\pm}$ and $V_n$  \vspace{0,05cm}},mathescape=true]
$\alpha$p[1] = -1;
Ap[1] = {$\alpha$p[1]};
For[n = 2, n <= imax, n++, $\alpha$p[n] = Factor[($\mu$p[2 kp] (2 kp)!/(n + 2 kp - 1)! BellY[n + 2 kp - 1, 2 kp, Join[Ap[n - 1], {0}]] + Sum[$\mu$p[i] i!/(n + 2 kp - 1)! BellY[n + 2 kp - 1, i, Ap[n + 2 kp - i]], {i, 2 kp + 1, n + 2 kp - 1}] - $\mu$p[ n + 2 kp - 1])/(2 kp $\mu$p[2 kp])]; Ap[n] = Join[Ap[n - 1], {n! $\alpha$p[n]}];]

$\alpha$n[1] = -1;
An[1] = {$\alpha$n[1]};
For[n = 2, n <= imax, n++, $\alpha$n[n] = Factor[($\mu$n[2 kn] (2 kn)!/(n + 2 kn - 1)! BellY[n + 2 kn - 1, 2 kn, Join[An[n - 1], {0}]] + Sum[$\mu$n[i] i!/(n + 2 kn - 1)! BellY[n + 2 kn - 1, i, An[n + 2 kn - i]], {i, 2 kn + 1, n + 2 kn - 1}] - $\mu$n[n + 2 kn - 1])/(2 kn $\mu$n[2 kn])]; An[n] = Join[An[n - 1], {n! $\alpha$n[n]}];]

For[n = 1, n <= imax, j++, V[n] = $\delta(\alpha$p[n] - $\alpha$n[n]);]
\end{lstlisting}

\section*{Appendix B: Proof of Corollary \ref{cor}}\label{sec:cor}

In this appendix, we present a brief proof of Corollary \ref{cor}. Recall that,  for $k^{\pm}$ known, the algorithms presented in Appendix A already give us a simple way for computing all the Lyapunov coefficients.
Thus, we start this proof by computing the coefficients $\alpha_n^{\pm}$, $n=1,2,3,4,$ for $k^{\pm}=1$. 
\begin{equation}\label{eq:coro1k1}
\begin{aligned}
\alpha_1^{\pm} = &  -1, \quad \alpha_2^{\pm} =  \dfrac{-2f_0 \pm \delta 2g_{0,0}}{3a}, \quad \alpha_3^{\pm} =  - {\alpha_2^{\pm}}^2, \\
\alpha_4^{\pm} = & \dfrac{1}{135a^3}\Big(80f_0^3 +(\mp \delta)150af_0^2g_{0,0}+132a^2f_0g_{0,0}^2 + (\mp \delta) 44 a^3 g_{0,0}^3 -90 a f_0 f_1 \\
& \pm \delta 36 a^2g_{0,0} f_1 +54 a^2f_2 +36a^4g_{0,1} \pm \delta 18 a^2f_0g_{1,0}-18a^3g_{0,0}g_{1,0}+(\mp \delta)18 a^3g_{2,0} \Big). \\
\end{aligned}
\end{equation}

From now on, we shall consider $k \geq 2$. 
In order to compute the remaining coefficients, the following partial Bell polynomials are needed:		\begin{equation}\label{eq:bellpoly}
		\begin{aligned}
		B_{n,n}(x_1) =&  (x_1)^n, \\
		B_{n,n-1}(x_1, x_2) =&  \binom{n}{2}(x_1)^{n-2}x_2, \\
		B_{n,n-2}(x_1, x_2,x_3) =  &{n \choose 3}(x_1)^{n-3}x_3 + 3{n \choose 4}(x_1)^{n-4}(x_2)^2, \\
		B_{n,n-3}(x_1, x_2, x_3, x_4) =& {n \choose 4}(x_1)^{n-4}x_4 + 10{n \choose 5}(x_1)^{n-5}x_2x_3 + 15{n \choose 6}(x_1)^{n-6}x_2^3. \\
		\end{aligned}
		\end{equation}
		
		Now, developing the recursive formula given by \eqref{alpha}  for $\alpha_i$ and using \eqref{eq:bellpoly}, we get that
		\begin{equation}\label{eq:coro1}
		\begin{aligned}
		\alpha_1^{\pm} = &  -1, \quad \alpha_2^{\pm} =   \dfrac{-\mu^{\pm}_{2k +1}}{k^{\pm} \mu^{\pm}_{2k}}, \quad \alpha_3^{\pm} =   - {\alpha_2^{\pm}}^2,   \\
		\alpha_4^{\pm} = & \dfrac{(7+k(3+2k)) {\mu^{\pm}_{2k+1}}^{3} - 6k(1+k) \mu^{\pm}_2k \mu^{\pm}_{2k+1} \mu^{\pm}_{2k+2} + 6k^2{\mu^{\pm}_{2k}}^2 \mu^{\pm}_{2k+3} }{6k^3 {\mu^{\pm}_{2k}}^3}. \\
		\end{aligned}
		\end{equation}
		From \eqref{eq:coro1},  we have to compute $\mu^{\pm}_{2k}, \mu^{\pm}_{2k+1}, \mu^{\pm}_{2k+2},$ and $\mu^{\pm}_{2k+3}$. Recall that we are denoting 
		\[
		f^{\pm}(x)=\sum_{i=0}^{\infty} f_i^{\pm}\, x^i\quad \text{and} \quad g^{\pm}(x,y)=\sum_{i=0}^{\infty} \sum_{j=0}^{\infty}  g_{i,j}^{\pm}\, x^i y^j.
		\]  Throughout the proof, we shall also drop the sign $\pm$  from $k^{\pm}$, $a^{\pm},$ $f_i^{\pm},$ and $g_{i,j}^{\pm}$.
		
		\bigskip		
		
\noindent{\bf Computation of $\mu^{\pm}_{2k}$.} 
		From \eqref{mui}, we have that 
		\begin{equation}\label{mu2kf1}
		\mu_{2k}^{\pm} = \dfrac{1}{(2k)!} \sum^{2k}_{j=1}(\mp \delta)^j {2k \choose j} (y_j^{\pm})^{(2k-j)}(0).
		\end{equation} 
		Taking \eqref{yi} into account, it follows that
		\begin{equation}\label{mu2kf2}
		(y_j^{\pm})^{(2k-j)}(0) = (\pm \delta)^{j-1}{a}(2k-1)!.
		\end{equation} Then, substituting \eqref{mu2kf2} in \eqref{mu2kf1}, we obtain
		\begin{equation}\label{mu2kf3}
		\mu_{2k}^{\pm}  =  \dfrac{1}{(2k)!} \sum^{2k}_{j=1}(\mp \delta)^j {2k \choose j} (\mp \delta)^{j-1}{a}(2k-1)!  = \dfrac{(\mp \delta) {a}}{2k}. 
		\end{equation}
	
	\bigskip	
	
\noindent{\bf Computation of $\mu^{\pm}_{2k+1}$.}
		From \eqref{mui}, we have that 
		\begin{equation}\label{mu2k1f1}
		\mu_{2k+1}^{\pm}  = \dfrac{1}{(2k+1)!} \sum^{2k+1}_{j=1}(\mp \delta)^j {2k+1 \choose j} (y_j^{\pm})^{(2k+1-j)}(0).
		\end{equation}
Taking \eqref{yi} into account, it follows that
		\begin{equation}\label{mu2k1f2}
		(y_j^{\pm})^{(2k +1 -j)}(0)  = \begin{cases} 
		(2k)!f_0,\quad \text{ if } j=1,\vspace{0.1cm} \\
		(\pm \delta)^{j-1} (2k)! f_0 + (\pm\delta)^j{a}(2k-1)!g_{0,0},\quad \text{ if }2 \leq j \leq 2k+1.
		\end{cases}
		\end{equation}
		Although formula \eqref{yi} distinguishes the cases $j<2k+1$ and $j=2k+1,$  when developing this formula we see that these cases can be put together as \eqref{mu2k1f2}. 
		Now, substituting \eqref{mu2k1f2} into \eqref{mu2k1f1}, we obtain 
		\begin{equation}\label{mu2k1f3}
		\begin{aligned}
		\mu_{2k+1}^{\pm}  =&   \dfrac{1}{(2k+1)!} \Big((\mp \delta){2k+1 \choose 1}(2k)!f_0  +  \sum^{2k+1}_{j=2} (\mp \delta)^j {2k + 1 \choose j} \big( (\pm \delta)^{j-1} (2k)! f_0  \\ & + (\pm\delta)^j{a}(2k-1)!g_{0,0}\big)\Big)=  \dfrac{(\mp \delta)f_0+{a}g_{0,0}}{2k+1}.  \\
		\end{aligned}
		\end{equation}
		
	\bigskip	
\noindent{\bf Computation of $\mu^{\pm}_{2k+2}$.}
		From \eqref{mui}, we have that

		\begin{equation}\label{mu2k2f1}
		\mu_{2k+2}^{\pm}  = \dfrac{1}{(2k+2)!} \sum^{2k+2}_{j=1}(\mp \delta)^j {2k+2 \choose j} (y_j^{\pm})^{(2k+2-j)}(0).
		\end{equation}
Taking \eqref{yi} into account, it follows that
		\begin{equation}\label{mu2k2f2}
		\begin{aligned}
		(y_1^{\pm})^{(2k +1)}(0) =& 
		(2k+1)!f_1,  \\
		(y_2^{\pm})^{(2k)}(0)=& (\pm \delta)\Big(\dfrac{(2k)!}{(2k-1)!} 2f_1+  2f_1 \Big) + 2\Big(f_0g_{0,0} + {a}g_{1,0} \Big), \\
		(y_j^{\pm})^{(2k+2-j)}(0) =& \dfrac{2k+2-j}{2!} \bigg( (\pm \delta)^{j-1} \Big(\dfrac{(2k)!}{(2k+1-j)!}2 f_1  \\ 
		&+ {j-1 \choose j-2}\dfrac{(2k)!}{(2k+2-i)!} 2 f_1 \Big) + 2\Big( \big((\pm \delta)^{j-2} \dfrac{(2k)!}{(2k+2-j)!}f_0 \\ 
		& + {a}(\pm \delta)^{j-3} \dfrac{(2k-1)!}{(2k+2-j)!}g_{0,0} \big) g_{0,0} \\
		& + {a}(\pm \delta)^{j-2} \dfrac{(2k-1)!}{(2k+1-j)!}g_{1,0} \Big) \\ 
		& +\big(2 {a} {j-1 \choose j-2} (\pm \delta)^{j-4} \dfrac{(2k-1)!}{(2k+2-j)!}  g_{1,0}\big)  \bigg),  \text{ if } 3 \leq j \leq 2k, \\
		(y_{2k+1}^{\pm})'(0)=& (2k)!f_1+ \dfrac{(2k)!}{(2k-1)!}(2k)!f_1 + \big( (\pm \delta)(2k)!f_0  \\ 
		&+ {a}(2k-1)!g_{0,0} \big) g_{0,0} \\ 
		&+ (\pm \delta){a}(2k-1)!g_{1,0}  + {2k \choose 2k-1}(\pm \delta) a^{\pm }(2k-1)!g_{1,0},  \\
		y_{2k+2}^{\pm}(0)=& {2k+1 \choose 2k} (2k)!(\pm \delta)f_1 + \big( (2k)!f_1 + (\pm \delta){a}(2k-1)!g_{0,0} \big)g_{0,0} \\
		& + {2k+1 \choose 2k}{a}(2k-1)!g_{1,0}. \\ 
		\end{aligned}
		\end{equation}
		Substituting \eqref{mu2k2f2} in \eqref{mu2k2f1} and proceeding with algebraic manipulations, we obtain 
		\begin{equation}\label{mu2k2f3}
		\begin{aligned}
		\mu_{2k+2}^{\pm} = & \dfrac{2f_0g_{0,0}+ (\mp \delta) {a} g_{0,0}^2 + 2 (\mp \delta)f_1+ {a}g_{1,0}}{4k+4}. \\ 
		\end{aligned}
		\end{equation}

	\bigskip	
\noindent{\bf Computation of $\mu^{\pm}_{2k+3}$.}
		From \eqref{mui}, we have that

		\begin{equation}\label{mu2k3f1}
		\mu_{2k+3}^{\pm}  = \dfrac{1}{(2k+3)!} \sum^{2k+3}_{j=1}(\mp \delta)^j {2k+3 \choose j} (y_j^{\pm})^{(2k+3-j)}(0). 
		\end{equation}
		Taking \eqref{yi} into account, it follows that
		\begin{align*}
		(y_1^{\pm})^{(2k +2)}(0)  = & 
		(2k+2)!f_2,  \\
		(y_2^{\pm})^{(2k+1)}(0) = & (\pm \delta) (2k+1)!\big( 2k f_2 + 12f_2\big)+ (y_1^{\pm})^{(2k +1)}(0)g_{0,0}   \\
		& + 2(y_1^{\pm})^{(2k)}(0)g_{1,0} + 2(y_1^{\pm})^{(2k-1)}(0)g_{2,0}, \\
		(y_3^{\pm})^{(2k)}(0) = & \dfrac{(2k)!}{3!} \big( \dfrac{(2k)!}{(2k-2!)}6f_2+ 3\dfrac{(2k)!}{(2k-1)!} 12f_2  + 12f_2\big)+ (y_{2}^{\pm})^{(2k)}(0)g_{0,0}   \\
		& + 2(y_{2}^{\pm})^{(2k-1)}(0)g_{1,0} + 2(y_{2}^{\pm})^{(2k-2)}(0)g_{2,0} \\
		& + 3(\pm \delta)\big((y_{1}^{\pm})^{(2k)}(0)g_{1,0} + 2(y_{1}^{\pm})^{(2k-1)}(0)g_{2,0} \big),  \\
		(y_j^{\pm})^{(2k+3-j)}(0)  = & \dfrac{(2k+3-j)!}{3!} \bigg( (\pm \delta)^{j-1} \Big(\dfrac{(2k)!}{(2k+1-j)!}6 f_2  + {j-1 \choose j-2}\dfrac{(2k)!}{(2k+2-i)!} 12 f_2  \\ 
		& +  {j-1 \choose j-3} \dfrac{(2k-1)!}{(2k+3-j)!} 12f_2\Big)\bigg) + (y_{j-1}^{\pm})^{(2k+3-j)}(0)g_{0,0}   \\
		& + 2(y_{j-1}^{\pm})^{(2k+2-j)}(0)g_{1,0} + 2(y_{j-1}^{\pm})^{(2k+1-j)}(0)g_{2,0} \\
		& + {j-1 \choose j-2}(\pm \delta)\big((y_{j-2}^{\pm})^{(2k+3-j)}(0)g_{1,0} + 2(y_{j-2}^{\pm})^{(2k+2-j)}(0)g_{2,0} \big)\\
		& +  2{j-1 \choose j-3}(y_{j-3}^{\pm})^{(2k+3-j)}(0)g_{2,0}, \text{ if } 4 \leq j \leq 2k,  \\
		(y_{2k+1}^{\pm})''(0) = & (2k)!2f_2+ {2k \choose 2k-1}(2k)!4f_2+{2k \choose 2k-2}\dfrac{(2k)!}{2!}4f_2  \\
		& + {(y_{2k}^{\pm})}''(0)g_{0,0}+ 2{(y_{2k}^{\pm})}'(0)g_{1,0} + 2y_{2k}^{\pm}(0)g_{2,0} \\
		& + 2{2k \choose 2k-1}(\pm \delta)\Big(\big(\dfrac{(2k)!}{2!}f_0+{a}(\pm \delta) \dfrac{(2k-1)!}{2!}g_{0,0}\big)g_{1,0} \\
		& + 2{a}(2k-1)!g_{2,0}\Big)+ 4 {2k \choose 2k-2} (\pm \delta) {a}\dfrac{(2k-1)!}{2!}g_{2,0}, \\
		(y_{2k+2}^{\pm})'(0) = & {2k+1 \choose 2k} (2k)!(\pm \delta)2f_2 + (\pm \delta){2k+1 \choose 2k-1} (2k)!2f_2 \\
		& + \Big( (2k)!f_1 + {2k \choose 2k-1}(2k)!f_1 + \big((\pm \delta)(2k)!f_0 + {a}(2k-1)!g_{0,0}\big)g_{0,0} \\
		& + (\pm \delta){a}(2k-1)!g_{1,0}  + {2k \choose 2k-1}(2k-1)!g_{1,0} \Big)g_{0,0} \\
		& + \big( (2k)!f_0+ (\pm \delta){a}(2k-1)!g_{0,0}\big)g_{1,0} \\
		& + {2k+1 \choose 2k}(\pm \delta)\Big(\big((\pm \delta)(2k)!f_0+{a}(2k-1)!g_{0,0}\big)g_{1,0} \\
		& +2(\pm \delta){a}(2k-1)!g_{2,0} \Big) + 2{2k+1 \choose 2k-1} {a}(2k-1)!g_{2,0},\\
		(y_{2k+3}^{\pm})'(0) = & {2k+2 \choose 2k} (2k)!2f_2 + \bigg({2k+1 \choose 2k}(2k)!(\pm \delta)f_1 \\
		&+ \big((2k)!f_0 + {a}(\pm \delta)(2k-1)!g_{0,0}\big)g_{0,0}  + {2k+1 \choose 2k}{a}(2k-1)!g_{1,0} \bigg)g_{0,0} \\
		& + {2k+2 \choose 2k+1}(\pm \delta)\big( (2k)!f_0+ (\pm \delta){a}(2k-1)!g_{0,0}\big)g_{1,0} \\
		& + 2{2k+2 \choose 2k}(\pm \delta){a}(2k-1)!g_{2,0}.\\
		\end{align*}
		Notice that, at this point, the computations start to become more cumbersome. Also, in the formulae above, there are some values of $y_i^{(j)}(0)$ which are not explicitly computed. However, except for $(y_{j-1}^{\pm})^{(2k+1-j)}(0) = {a}(\pm \delta)^{j-2} \dfrac{(2k-1)!}{(2k+1-j)!}$, the others can be computed by using the formulae \eqref{mu2kf2}, \eqref{mu2k1f2}, and \eqref{mu2k2f2}. Then, substituting all these values into \eqref{mu2k3f1} and proceeding with algebraic manipulations, we obtain
		\begin{equation}\label{mu2k3f3}
		\begin{array}{ll}
		\mu_{2k+3}^{\pm} = & \dfrac{1}{6(3+3k)}  \Big(6 g_{0,0} f_1+6(\mp \delta)f_2+3 f_0 \big((\mp \delta)g_{0,0}^2 +g_{1,0}\big)\\
		&+{a}\big( g_{0,0}^3+3(\mp \delta) g_{0,0}g_{1,0} + 2g_{2,0}\big)\Big). \\ 
		\end{array}
		\end{equation}
		
		\bigskip
		
		Finally, substituting \eqref{mu2kf3}, \eqref{mu2k1f3}, \eqref{mu2k2f3}, and \eqref{mu2k3f3} into \eqref{eq:coro1} and proceeding with algebraic manipulations, we conclude that
		\begin{equation}\label{a4km2}
	\begin{aligned}
	\alpha_1^{\pm}=&  -1,\quad  \alpha_2^{\pm}=  \dfrac{-2f_0 \pm 2 \delta a g_{0,0}}{2a k + a},\quad \alpha_3^{\pm}= -(\alpha_2^{\pm})^2, \vspace{0.2cm}\\
	\alpha_4^{\pm}=& \dfrac{4({k} (2 {k}+3)+7) (-f_0 \pm \delta {a}   {g_{0,0}})^3}{3{a}^3(2 {k}+1)^3} \mp  \dfrac{ 12 \delta a(f_0\mp \delta {a}   {g_{0,0}}) \left({a} \left({g_{1,0}} \mp \delta  {g_{0,0}}^2\right)+2 f_0 {g_{0,0}} \mp 2 \delta  {f_1}\right)}{3{a}^3(8{k}+4)} \vspace{0.2cm}\\  &\pm  \dfrac{ 4\delta {a}^2 \left({a} \left({2g_{2,0}}+{g_{0,0}}^3\right)+6 {g_{0,0}} {f_1}+3 f_0 {g_{1,0}}\mp 3 \delta  \left({a} {g_{1,0}} {g_{0,0}}+f_0 {g_{0,0}}^2+{2f_2}\right) \right)}{3 {a}^3 (8 {k}+12)},
	\end{aligned}
		\end{equation} for $k \geq 2$. This proof follows by comparing expressions \eqref{eq:coro1k1} and \eqref{a4km2} with the ones provided in the statement of the of Corollary \ref{cor}.
\section*{Acknowledgments}

The authors thank the referee for the constructive comments and
suggestions which led to an improved version of the manuscript.

DDN is partially supported by FAPESP grants 2018/16430-8, 2019/10269-3, and 2018/13481-0, and by CNPq grants 306649/2018-7 and 438975/2018-9. LAS is partially supported by CAPES 001.

\bibliographystyle{abbrv}
\bibliography{references}

\end{document}